\documentclass[11pt,reqno]{article}

\usepackage[english]{babel}

\usepackage{sectsty}
\sectionfont{\large\bfseries}
\subsectionfont{\normalsize\bfseries}
\subsubsectionfont{\normalsize\mdseries\itshape}


\usepackage{amssymb}
\usepackage{amsmath}
\usepackage{amsthm}

\usepackage{txfonts}
\usepackage{eucal}

\usepackage{enumerate}
\usepackage[longnamesfirst,numbers]{natbib}
\usepackage[bookmarks,colorlinks=true,citecolor=blue]{hyperref}


\theoremstyle{plain}
\newtheorem{thm}{Theorem}
\newtheorem{lem}{Lemma}

\newtheorem*{cor*}{Corollary}
\newtheorem{prop}{Proposition}
\newtheorem*{prop*}{Proposition}

\theoremstyle{definition}
\newtheorem{defn}{Definition}
\newtheorem*{defn*}{Definition}

\theoremstyle{remark}

\numberwithin{equation}{section}

\newcommand{\Bit}{\ensuremath{\{0,1\}}}
\newcommand{\Nat}{\ensuremath{\mathbb{N}}}

\newcommand{\Rat}{\ensuremath{\mathbb{Q}}}
\newcommand{\Real}{\ensuremath{\mathbb{R}}}
\newcommand{\Cant}{\ensuremath{2^{\omega}}}
\newcommand{\Baire}{\ensuremath{\omega^{\omega}}}
\newcommand{\Str}[1][<\omega]{\ensuremath{2^{#1}}}
\newcommand{\Nstr}[1][<\omega]{\ensuremath{\omega^{#1}}}
\newcommand{\Sle}{\ensuremath{\subset}}
\newcommand{\Sleq}{\ensuremath{\subseteq}}

\newcommand{\Sgeq}{\ensuremath{\supseteq}}

\newcommand{\Cl}[1]{\ensuremath{#1}}
\newcommand{\Subcl}[1]{\ensuremath{\Cl{#1} \subseteq \Cant}}
\newcommand{\Cyl}[1]{\ensuremath{N_{#1}}}
\newcommand{\Acyl}[1]{\ensuremath{N(#1)}}

\newcommand{\Rest}[1]{\ensuremath{\!\restriction_{#1}}}

\newcommand{\Conc}{\ensuremath{\mbox{}^\frown}}
\newcommand{\Estr}{\ensuremath{\epsilon}}
\newcommand{\Tup}[1]{\ensuremath{\langle #1 \rangle}}

\newcommand{\Hmeas}{\ensuremath{\mathcal{H}}}
\newcommand{\Hm}[1]{\ensuremath{\Hmeas^{#1}}}

\newcommand{\Hdim}[1][\mbox{}]{\ensuremath{\dim^{#1}_{\operatorname{H}}}}

\newcommand{\Pmeas}{\ensuremath{\CMcal{P}}}
\newcommand{\Leb}{\ensuremath{\lambda}}

\DeclareMathOperator{\T}{T}

\DeclareMathOperator{\Pre}{Pre}
\DeclareMathOperator{\K}{K}
\DeclareMathOperator{\meas}{meas}
\DeclareMathOperator{\supp}{supp}

	\title{Effectively Closed Sets of Measures and Randomness}

	\author{Jan Reimann\thanks{\texttt{reimann@math.berkeley.edu}} \\[2ex] {\small Department of Mathematics}\\ {\small University of California at Berkeley}}

	\date{}

\begin{document}

\maketitle

	\begin{abstract}	
	  We show that if a real $x \in \Cant$ is strongly Hausdorff $\Hm{h}$-random, where $h$ is a dimension function corresponding to a convex order, then it is also random for a continuous probability measure $\mu$ such that the $\mu$-measure of the basic open cylinders shrinks according to $h$. The proof uses a new method to construct measures, based on effective (partial) continuous transformations and a basis theorem for $\Pi^0_1$-classes applied to closed sets of probability measures. We use the main result to give a new proof of Frostman's Lemma, to derive a collapse of randomness notions for Hausdorff measures, and to provide a characterization of effective Hausdorff dimension similar to Frostman's Theorem.
	\end{abstract}

%
%
\section{Introduction} \label{sec-intro}
The duality between measures and the sets they ``charge'' is a central theme in modern analysis. A particular good example of this duality is \emph{fractal geometry}. While it initially studied fractal properties of \emph{sets} in Euclidean or other metric spaces, the geometric analysis is now widely applied to \emph{measures}, too. The books by \citet{mattila:1995} and \citet{edgar:1998} reflect this quite well.

A cornerstone of this development was the work by \citet{frostman:1935}. He realized that there is a close connection between the Hausdorff dimension of a set and the energies of measures residing on it. If a (Borel) set $A$ is large in the sense that its Hausdorff dimension exceeds $s$, then there is a probability measure that resides on $A$ such that its $s$-energy is finite.

\medskip
This article develops an effective analogue to Frostman's work. The main theorem shows that if a real $x$ is strongly random for a Hausdorff measure $\Hm{h}$, there exists a probability measure $\mu$ such that the measure of the basic open cylinders shrinks according to $h$ and such that $x$ is $\mu$-random.

The paper should be seen as an instance or further starting point of a more general endeavor, namely the investigation of how the complexity of a real (logical or randomness theoretic) relates to the complexity of the measures for which the real is random. The idea is that the study of duality between the complexity of sets and measures leads to interesting insights and questions when transferred to an effective setting. Recent work by Theodore Slaman and the author \citep{reimann-slaman:tbs,reimann-slaman:ip2} gives evidence for this.

While a real is non-trivially random (i.e.\ there exists a probability measure such that $x$ is $\mu$-random and $\mu(\{x\}) = 0$, see \citep{reimann-slaman:tbs}) if and only if it is not recursive, the question which reals are random for continuous probability measures found quite unexpected answers. In particular, every real that is not random for such a measure with respect to $\Sigma^0_1$-tests is $\Delta^1_1$. However, a complete characterization of such reals is yet unknown. 
This paper can also be seen as a contribution to this question, by excluding uncountable $\Pi^0_1$ classes of nontrivial Hausdorff measure and giving a sufficient criterion for randomness for a continuous measure.

\medskip
The techniques used to prove the main result are quite different from the classical setting. In the latter, tools like the Hahn-Banach theorem and the properties of weak convergence play a major role. In the effective setting, these are replaced rather by recursion theoretic techniques such as basis theorems for $\Pi^0_1$ classes and the Gacs-Kucera reducibility result.
The title of this article refers to applying these techniques to sets of (representations of) measures instead of sets of reals. As an easy corollary, we obtain a new proof of the classical result based on the effective techniques.

This raises the hope that the effective theory might in turn contribute to the classical setting, via relativization, as it did in the case of effective descriptive set theory. Recently, \citet{kjoshanssen:ip, kjoshanssen:ip2} has obtained results in this direction.

The outline of the paper is as follows. In Section \ref{sec-meas-top-cant}, we first present a brief account of measure theory on Cantor space. The basic notion is that of an outer measure derived from a premeasure. We will then put particular emphasis on defining a representation of the space of probability measures that is a $\Pi^0_1$ subset of $\Cant$. Finally, we introduce randomness for arbitrary outer measures.

Section \ref{sec-rand} presents the main results. We first deal with basis theorems for $\Pi^0_1$ sets (of measures). After giving some background from the classical theory, we prove that every real that is strongly Hausdorff random is also random with respect to a certain continuous probability measure. 

Section \ref{sec-consequ-cap} presents a number of applications of the main result. First, we give a new proof of Frostman's Lemma. Then we investigate randomness notions for Hausdorff measures. Several test notions based on uniformly r.e.\ tests have been proposed, most of which are equivalent on probability measures but differ on Hausdorff measures. We use the relation between randomness for Hausdorff measures and continuous probability measures to show that in a certain sense strong randomness is indeed the strongest possible randomness notion.  
As an easy corollary, we obtain that the dimension notions induced by such randomness notions coincide.
Finally, we use the main theorem to derive an effective version of Frostman's result, showing that Hausdorff and capacitary dimension coincide. 

We conclude with a few remarks on analogous results for packing dimension, and discuss an open question as well as directions for further research.

We assume familiarity with the basic notions of measure theory and descriptive set theory, as treated in \citep{kechris:1995}. Furthermore, we presuppose some knowledge in algorithmic randomness and computability theory, as can be found in \citep{li-vitanyi:1997} or \citep{downey-hirschfeldt:ip}.

%
%
\section{Measures and Randomness on Cantor Space} \label{sec-meas-top-cant}

In this section we briefly review the basic notions of measure on the Cantor space $\Cant$. We make use of the special topological structure of $\Cant$ to give a unified treatment of a large class of measures, not necessarily $\sigma$-finite. 
This way, we do not have to distinguish between probability measures and Hausdorff measures, for instance.

%
%
\subsection{Outer measures on Cantor space}

We work in $\Cant$ as a compact Polish space. A countable basis of the topology is given by the \emph{cylinder sets}
\[
	N_\sigma = \{ x : \: x\Rest{n} = \sigma\},
\]
where $\sigma$ is a finite binary sequence. We will occasionally use the notation $\Acyl{\sigma}$ in place of $\Cyl{\sigma}$ to avoid multiple subscripts. $\Str$ denotes the set of all finite binary sequences. If $\sigma, \tau \in \Str$, we use $\Sleq$ to denote the usual prefix partial ordering. This extends in a natural way to $\Str \cup \Cant$. Thus, $x \in \Cyl{\sigma}$ if and only if $\sigma \Sle x$. Finally, given $U \subseteq \Str$, we write $\Cyl{U}$ to denote the open set induced by $U$, i.e. $\Cyl{U} = \bigcup_{\sigma \in U} \Cyl{\sigma}$.

The following method to construct outer measures has been referred to as \emph{Method I} \citep{munroe:1953, rogers:1970}.

\begin{defn}
Let $\Str$ be the set of all finite binary sequences. A \emph{premeasure} is a mapping $\rho: \Str \to \mathbb{R}^{\geq 0}$.
\end{defn}
	
If $\rho$ is a premeasure, define the set function $\mu_\rho: \mathcal{P}(\Cant) \to \mathbb{R}^{\geq 0}$ by letting
	\begin{equation} \label{equ-outer-measure}
		\mu_\rho(A) = \inf \left\{ \sum_{\sigma \in U} \rho(\sigma) : \: A \subseteq  N_U \right\},
	\end{equation}
where we set $\mu_\rho(\emptyset) = 0$.
It can be shown that $\mu_\rho$ is an outer measure, i.e.\ a real-valued, non-negative, monotone, subadditive set function. A \emph{measure} is given by restricting an outer measure to the $\sigma$-algebra of \emph{measurable sets}. Since we are mostly interested in nullsets, which are always measurable, we do not make the distinction between measures and outer measures here, and will subsequently refer to an outer measure simply as \emph{a measure}.
Furthermore, we will always assume that an outer measure $\nu$ is derived from a premeasure as in \eqref{equ-outer-measure}. (\citet{rogers:1970} studies in great detail the relations between measures, outer measures, and premeasures.)

Of course, the nature of the outer measure $\mu_\rho$ obtained via \eqref{equ-outer-measure} will depend on the nature of the premeasure $\rho$. We will study two important classes of outer measures: probability measures and Hausdorff measures.

%
%
\subsection{Probability measures}

A \emph{probability measure} $\nu$ is any measure that is based on a premeasure $\rho$ which satisfies $\rho(\emptyset) = 1$ and
\begin{equation} \label{equ-probability measure}
	\rho(\sigma) = \rho(\sigma \Conc 0) + \rho(\sigma \Conc 1)
\end{equation}
for all finite sequences $\sigma$. The resulting measure $\mu_\rho$ preserves $\rho$ in the sense that $\mu_\rho(N_\sigma) = \rho(\sigma)$ for all $\sigma$. This follows from the Caratheodory extension theorem. We denote by $\Pmeas$ the set of all probability measures on $\Cant$. In the following, we will often identify probability measures with their underlying premeasure, i.e.\ we will write $\mu(\sigma)$ instead of $\mu(\Cyl{\sigma})$.

If $\rho$ is a probability premeasure, then $\mu_\rho$ is a \emph{Borel measure}, i.e.\ all Borel sets are measurable, and their measure is a finite real number in $[0,1]$. 

It will later be important to identify the subset of $\Cant$ on which a probability measure `resides'. The \emph{support} $\operatorname{supp}(\mu)$ of a probability measure $\mu$ is the smallest closed set $F \subseteq \Cant$ such that $\mu(\Cant\setminus F) = 0$.

\medskip
For $\rho(\sigma) = d(\Cyl{\sigma}) = 2^{-|\sigma|}$ we obtain the \emph{Lebesgue measure} $\Leb$ on $\Cant$, which is the unique translation invariant measure $\mu$ on $\Cant$ for which $\mu(\Cyl{\sigma}) = d(\Cyl{\sigma})$. Here $d$ denotes the diameter function derived from the standard metric on Cantor space,
\[
	d(x,y) = \begin{cases}
		2^{-N} & \text{if $x \neq y$ and $N$ is minimal such that $x(N) \neq y(N)$}, \\
		0 & \text{if $x=y$}.
	\end{cases}
\]

\medskip
\emph{Dirac measures} are probability measures concentrated on a single point. For $x \in \Cant$, we define
\[
	\rho(\sigma) = \begin{cases}
	   	1 & \text{if } \sigma \subset x, \\
		0 & \text{otherwise.}
	\end{cases}
\]
For the induced outer measure we obviously have $\mu_\rho(A) = 1$ if and only if $x \in A$, and $\mu_\rho(A) = 0$ if and only if $x \not\in A$. The corresponding measure is denoted by $\delta_x$.

\medskip
If, for a measure $\mu$ and $x \in \Cant$, $\mu(\{x\}) > 0$, then $x$
is called an \emph{atom} of $\mu$. Obviously, $x$ is the unique atom of
$\delta_x$. A measure that does not have any atoms is called
\emph{continuous}.

%
%
\subsection{Hausdorff measures}

Hausdorff measures are of fundamental importance in geometric measure theory. They share the common feature that the premeasures they stem from only depend on the diameter of an open set. Therefore, the resulting measure will be translation invariant. 

A \emph{dimension function} $h$ is a nonnegative, nondecreasing, continuous on the right function defined on all nonnegative reals. Assume, furthermore, that $h(t) > 0$ if and only if $t>0$. Define the premeasure $\rho_h$ as
\[
	\rho_h(\Cyl{\sigma}) = h(d(\Cyl{\sigma})) = h(2^{-|\sigma|}).
\]

The resulting measure $\mu_{\rho_h}$ will in general not be a Borel measure. Therefore, one refines the transition from a premeasure to an outer measure, also known as \emph{Method II} \citep{munroe:1953, rogers:1970}. The resulting outer measure is denoted by $\Hm{h}$. Again, we will mostly be concerned with $\mathcal{H}^h$-nullsets. It is not hard to see that for any set $A$, $\mathcal{H}^h(A)= 0$ if and only if $\mu_{\rho_h}(A)=0$, that is, the nullsets obtained from a premeasure via Method I and Method II coincide. Hence in the case of nullsets we can work with the less involved definition via Method I and therefore refer the reader to the above references for details on Method II.

Due to the special nature of the standard metric $d$ on $\Cant$, only diameters of the form $2^{-n}$, $n \in \Nat$, appear. So we can take any nondecreasing, unbounded function $h: \Nat \to \Real^{\geq 0}$ (in fact, $h: \Nat \to \Nat$ suffices), and set $\rho_h(\Cyl{\sigma}) = 2^{-h(|\sigma|)}$. Such functions $h$ are called \emph{orders}. The resulting Hausdorff measure will, in slight abuse of notation, also be denoted by $\mathcal{H}^{h}$. Finally an order is called \emph{convex}, if for all $n$, $h(n+1) \leq h(n) + 1$. Let $\Hmeas$ be the set of premeasures corresponding to convex order functions.

\citet{reimann-stephan:2005} studied the class of \emph{geometrical premeasures}. These satisfy the following condition: There 
are real numbers $p,q$ such that
\begin{enumerate}[(G1)]
\item $1/2 \leq p < 1$ and $1 \leq q < 2$;
\item $(\forall \sigma \in \Str) \, (\forall i \in \Bit) \: [\rho(\sigma \Conc i) \leq
p\rho(\sigma)]$;
\item $(\forall \sigma \in \Str) \: [q \rho(\sigma) \leq \rho(\sigma \Conc 0)+\rho(\sigma \Conc 1)]$.
\end{enumerate}

The class $\mathcal{G} \subseteq  \Pmeas \, \cup\, \Hmeas$ of all geometrical premeasures comprises all premeasures based on orders of the form $h(n) = \alpha n$, $0 < \alpha \leq 1$, as well as many probability measures such as all non-degenerate Bernoulli measures and measures satisfying \eqref{equ-frostman-prop} for a geometrically increasing order.

\medskip
Among the numerous Hausdorff measures, the family of $t$-dimensional Hausdorff measures $\Hm{t}$ given by $h(n) = tn$, where $0\leq t \leq 1 $ is arguably most prominent. It is not hard to see that for any set $A$, $\Hm{s}(A) < \infty$ implies $\Hm{t}(A) = 0$ for all $t > s$. Likewise, $\Hm{r}(A) = \infty$ for all $r < s$. Thus there is a critical value where $\Hm{s}$ `jumps' from $\infty$ to $0$. This value is called the \emph{Hausdorff dimension} of $A$, written $\Hdim A$. Formally,
\[
	\Hdim A = \inf \{ s : \: \Hm{s}(A) = 0 \}.
\]
Hausdorff dimension is a central notion in fractal geometry and has recently received a lot of attention in the effective setting, too. We will not dwell further on this here but refer to  \citep{falconer:1990, mattila:1995, lutz:2003, reimann:2004}.

%
%
\subsection{Representations of premeasures} \label{subsec-repres-premeas}

To define randomness, we want to incorporate measures into the effective aspects of a randomness test. For this purpose, we have to represent it in a form that makes it accessible for recursion theoretic methods. Essentially, this means to code a measure via an infinite binary sequence or a function $f:\Nat \to \Nat$.

\medskip
The way we introduced it, an outer measure on $\Cant$ is completely determined by its underlying premeasure defined on the cylinder sets. It seems reasonable to represent these values via approximation by rational intervals.

\begin{defn}\label{def-rational-repr}
	Given a premeasure $\rho$, define its \emph{rational representation} $r_\rho$ by letting, for all $\sigma \in \Str$, $q_1, q_2 \in \Rat$,
	\begin{equation}
		\Tup{\sigma, q_1, q_2} \in r_\rho \; \Leftrightarrow \; q_1 < \rho(\sigma) < q_2.
	\end{equation}
\end{defn}
 
The real $r_\rho$ encodes the complete information about the premeasure $\rho$ in the sense that  for each $\sigma$, the value $\rho(\sigma)$ is uniformly recursive in $r_\rho$. 

\medskip
While the rational representation is defined for every premeasure, it does not reflect well the richer structure of certain families of measures, such as the set of probability measures. In particular, the set of reals $z$ such that $z$ is the rational representation of a probability measure is not $\Pi^0_1$. Since we will need to exploit this structure later, in the following we introduce an alternative representation for probability measures.

An effective representation of probability measures has been developed elsewhere (for example in \citep{gacs:2005}), but we found none of the accounts completely adequate for our purposes. Therefore, in the following we give a succinct description of how to devise a $\Pi^0_1$-class in $\Cant$ of representations of probability measures. In general, our approach follows the framework of \citet{moschovakis:1980}, with a few adaptations regarding compactness. Each step can be justified by resorting to the accordant results in \citep{moschovakis:1980}.

%
%
\subsection{The space of probability measures on Cantor space}

Recall that $\Pmeas$ denotes the set of all probability measures on $\Cant$. $\Pmeas$ can be given a topology (the so-called \emph{weak topology}) by defining $\mu_n \to \mu$ if $\mu_n(\Cl{B}) \to\mu(\Cl{B})$ for all Borel sets $\Cl{B}$ whose boundary has $\mu$-measure $0$. This is equivalent to requiring that $\int f d\mu_n \to \int f d\mu$ for all bounded continuous real-valued functions $f$ on $\Cant$. In Cantor space, the weak topology is also generated by sets of the form
\begin{equation} \label{equ-rat-repres}
	M_{\sigma, p,q} = \{ \mu \in \Pmeas: \: p < \mu(\Cyl{\sigma}) < q \},
\end{equation}
where $\sigma \in \Str$ and $p < q$ are rational numbers in the unit interval. The sets $M_{\sigma, p,q}$ form a \emph{subbasis} of the weak topology.
 
It is known that if $X$ is Polish and compact metrizable, then so is the space of all Borel  probability measures on $X$ (see for instance \citep{kechris:1995}). Therefore, 
$\Pmeas$ is compact metrizable and Polish. A compatible metric is given by
\[
	d_{\meas}(\mu,\nu) = \sum_{n=1}^\infty 2^{-n} d_n(\mu,\nu),
\]
where
\[
	d_n(\mu,\nu) = \frac{1}{2} \sum_{|\sigma| = n} |\mu(\Cyl{\sigma}) - \nu(\Cyl{\sigma})|.
\]

We want to find an \emph{effective presentation} of $\Pmeas$ reflecting its properties. A countable, dense
subset $\mathcal{D} \subseteq \Pmeas$ is given by the set of measures which assume
positive, rational values on a finite number of rationals, i.e. $\mathcal{D}$ is the set
of measures of the form
\begin{equation*}
  \nu_{\Delta, Q} = \sum_{\sigma \in \Delta} Q(\sigma) \delta_{\sigma \Conc 0^\omega},
\end{equation*}
where $\Delta$ is a finite set of finite strings (representing dyadic rational numbers) and $Q: \Delta \to [0,1]\cap \Rat$ such that $\sum_{\sigma \in \Delta} Q(\sigma) = 1$.  

A straightforward calculation shows that for $\mu, \nu \in \mathcal{D}$, the following two relations are recursive:
\[
	d_{\meas}(\mu,\nu) < q \quad \text{ and } \quad d_{\meas}(\mu,\nu) \leq q, 
\]
for rational $q \geq 0$.

We fix an effective enumeration of $\mathcal{D} = \{\nu_0, \nu_1, \dots\}$. $\mathcal{D}$ is called a \emph{recursive presentation}. We can invoke the basic machinery of \emph{(effective) descriptive set theory} to obtain a recursive surjection
\[
	\pi: \Baire \to \Pmeas 
\]
and a $\Pi^0_1$ set $P \subseteq \Baire$ such that $\pi|_P$ is one-to-one and $\pi(P) = \Pmeas$. 
(See \citet{moschovakis:1980}, 3E.6.) This is achieved via an \emph{effective Lusin scheme}, a family of sets mirroring the tree structure of $\Baire$. 

However, since $\Pmeas$ is compact, we would like to have a representation that is not only closed but \emph{effectively compact}, that is, a $\Pi^0_1$ subset of $\Cant$. We can obtain such a representation, but we have to give up injectivity for this.
We can use the compactness of $\Pmeas$ to devise a finitely branching \emph{Souslin scheme} for $\Pmeas$. 
More precisely, there exists a finite branching $T \subset \Nstr$ and a family $(U_\sigma)_{\sigma \in T}$ of non-empty open sets in $\Pmeas$ such that 
\begin{quote}
	\begin{enumerate}[(1)]
	\item $U_\Estr = \Pmeas$,
	\item for all $\sigma \in T$, $U_{\sigma} = \bigcup_{\sigma\Conc i \in T} U_{\sigma\Conc i}$,
	\item for all $\sigma\Conc i \in T$, $\overline{U_{\sigma\Conc i}} \subseteq U_\sigma$,
	\item for $\sigma \in T$, $d_{\meas}(U_\sigma) \leq 2^{-|\sigma|}$.
	\end{enumerate}	
\end{quote}
We can even assume that $T$ is \emph{uniformly branching}, i.e.\ for every level $n$, the number of immediate extensions of a string of length $n$ in T is unique. 
Since $\Pmeas$ is complete, for every $p \in [T]$, 
\[
	\bigcap_n U_{p\Rest{n}} \neq \emptyset.	
\]
In fact, $\bigcap_n U_{p\Rest{n}}$ contains a single measure $\mu_p$. Hence we can call $p$ a representation of $\mu_p$. Note that since the $U_{\sigma \Conc i}$ are not necessarily disjoint, a measure can have multiple (in fact, continuum many) representations this way.

The $U_\sigma$ can be chosen as open balls with respect to $d_{\meas}$ centered on measures from $\mathcal{D}$. More precisely, we can use the effectiveness of the metric $d_{\meas}$ to find a recursive, increasing sequence $(l_n)$ of natural numbers such that 
\begin{enumerate}[(i)]
	\item $U_\sigma$ is of the form $B_{2^{-n}}(\mu)$, where $\mu\in \mathcal{D}$ is concentrated on strings of length $l_n$ (or, more formally, on reals of the form $\sigma\Conc 0^\omega$, where $|\sigma| = l_n$),
	\item all sets of measures of this form appear as a $U_\sigma$.
	\item the mapping $\sigma \mapsto (\Delta,Q)$ such that $U_\sigma = B_{2^{-n}}(\nu_{\Delta, Q})$ is computable.
\end{enumerate}
Summing up, we obtain the following representation.

\begin{prop}
	There exists a recursive sequence $(r_n)$ and a continuous surjection
	\[
		\pi: [T] \to \Pmeas,
	\]
	where $T \subset \Nstr$ is the full $(r_n)$-branching tree, i.e.\ every node in $T$ of length $n$ has exactly $r_n$ immediate successors.
\end{prop}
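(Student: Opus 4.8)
The plan is to read both the surjection $\pi$ and the branching sequence $(r_n)$ directly off the finitely and uniformly branching Souslin scheme $(U_\sigma)_{\sigma \in T}$, taking properties (1)--(4) and (i)--(iii) as already established.

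\medskip
First I would define $\pi$. Fix $p \in [T]$. By (3) the closed sets $\overline{U_{p\Rest{n}}}$ are nested, by (1) together with the non-emptiness of each $U_\sigma$ they are non-empty, and by (4) their $d_{\meas}$-diameters tend to $0$. Since $(\Pmeas, d_{\meas})$ is a complete metric space, the intersection is a single measure,
\[
	\bigcap_n \overline{U_{p\Rest{n}}} = \bigcap_n U_{p\Rest{n}} = \{\mu_p\},
\]
and I set $\pi(p) = \mu_p$. This is exactly the measure singled out before the statement, so $\pi$ is well defined on $[T]$.

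\medskip
Continuity is then immediate from (4): if $p, p' \in [T]$ agree on their first $n$ coordinates, both $\mu_p$ and $\mu_{p'}$ lie in $U_{p\Rest{n}} = U_{p'\Rest{n}}$, so $d_{\meas}(\mu_p, \mu_{p'}) \le d_{\meas}(U_{p\Rest{n}}) \le 2^{-n}$. Thus $n \mapsto 2^{-n}$ is an explicit modulus of continuity for $\pi$. For surjectivity, given $\mu \in \Pmeas$ I construct a branch $p$ with $\mu \in U_{p\Rest{n}}$ for every $n$ by recursion: $\mu \in U_\Estr = \Pmeas$ by (1), and whenever $\mu \in U_{p\Rest{n}}$, the \emph{exact} equality (2), namely $U_{p\Rest{n}} = \bigcup_{(p\Rest{n})\Conc i \in T} U_{(p\Rest{n})\Conc i}$, furnishes a successor $i$ with $\mu \in U_{(p\Rest{n})\Conc i}$; I set $p(n) = i$. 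Then $\mu \in \bigcap_n U_{p\Rest{n}} = \{\mu_p\}$, so $\mu = \pi(p)$. Note it is crucial that (2) is a genuine set equality and not merely a dense cover, so that no point of $U_{p\Rest{n}}$ — in particular no $\mu$ lying on a boundary — is ever lost, and the recursion never stalls.

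\medskip
It remains to reindex $T$ as a full branching tree and to verify effectiveness, which is where the real work lies. Since $T$ is uniformly branching, every node of length $n$ has the same number $r_n$ of immediate successors; relabeling these successors by $0, \dots, r_n - 1$ in the order induced by the enumeration of $\mathcal{D}$ — legitimate because (i)--(iii) identify each $U_\sigma$ computably with a pair $(\Delta, Q)$ — turns $T$ into the full $(r_n)$-branching tree and preserves the $\pi$ just built. The point requiring care is that $(r_n)$ is recursive. Here $r_n$ is the number of admissible children of a length-$n$ node, and admissibility of a candidate child $B_{2^{-(n+1)}}(\nu) \subseteq U_\sigma = B_{2^{-n}}(\mu)$ reduces, via the triangle inequality, to the closed-ball containment condition $d_{\meas}(\mu,\nu) < 2^{-(n+1)}$. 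The candidate centers $\nu$ range over the finitely many measures in $\mathcal{D}$ concentrated on strings of length $l_{n+1}$, and the relations $d_{\meas}(\mu,\nu) < q$ and $d_{\meas}(\mu,\nu) \le q$ are recursive on $\mathcal{D}$; hence admissibility is decidable and $r_n$ is computable uniformly in $n$. This decidability of the containment relation, inherited from the recursiveness of $d_{\meas}$ on the dense set $\mathcal{D}$, is the crux of the argument; the remaining steps are bookkeeping.
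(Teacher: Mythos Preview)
Your approach matches the paper's: the proposition is stated there as a summary of the preceding Souslin-scheme construction with no separate proof, and you are simply making explicit the steps it leaves tacit --- definition of $\pi$ via the nested intersection, continuity from (4), surjectivity from the exact equality (2). That part is clean.

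The one place to tighten is your final paragraph. The admissibility criterion you write down, $d_{\meas}(\mu,\nu) < 2^{-(n+1)}$, does secure the closure containment (3) via the triangle inequality, but it does \emph{not} by itself yield uniform branching (different centers $\mu$ will generically have different numbers of $\nu$ satisfying this), nor does it guarantee the exact covering (2) --- balls of radius $2^{-(n+1)}$ whose centers lie within $2^{-(n+1)}$ of $\mu$ need not exhaust $B_{2^{-n}}(\mu)$. The paper does not claim the scheme arises from such a test; it simply asserts (with the usual hand-wave toward compactness and the recursiveness of $d_{\meas}$ on $\mathcal{D}$) that one can build a scheme satisfying (1)--(4), (i)--(iii), and uniform branching simultaneously, allowing repetitions among the $U_{\sigma\Conc i}$ if necessary. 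Once that is granted, recursiveness of $(r_n)$ is immediate from (iii): the assignment $\sigma \mapsto (\Delta,Q)$ is computable, hence the tree $T$ is recursive, hence one can count the children at each level. Your intuition that the effectiveness of $d_{\meas}$ on $\mathcal{D}$ is the crux is right; it is just used in constructing the scheme rather than as a downstream admissibility filter.
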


Now $P = [T]$ is a compact subset of $\Baire$. By a standard embedding, this corresponds to a closed subset of $\Cant$. We can therefore assume that $P$ is a $\Pi^0_1$ subset of $\Cant$, represented by a recursive, pruned tree $T_P \subseteq \Str$ such that $[T_P] = P$.

Every element in $P$ is an intersection of the nested $U_\sigma$, corresponding to a path through $T$. This path in turn represents a \emph{Cauchy sequence} of measures in $\mathcal{D}$.
We will therefore call the representation given by $P$ the \emph{Cauchy representation} of $\Pmeas$.

%
%
\subsection{Randomness} \label{ssec-randomness}

We briefly review the definition of randomness in the sense of Martin-L\"of for arbitrary outer measures. We refer to \citep{reimann:ta,reimann-slaman:tbs} for more details on this approach to randomness for arbitrary measures.

Martin-L\"of's concept of randomness is based on the fact that every nullset for a measure defined via Method I (and Method II, as is easily seen) is contained in a $G_\delta$-nullset. Essentially, a Martin-L\"of test is an effectively presented $G_\delta$ nullset (relative to some parameter $z$). 

\begin{defn} \label{def-test}
	Suppose $z \in \Cant$ is a real. A \emph{test relative to $z$}, or simply a \emph{$z$-test}, is a set $W \subseteq \Nat \times \Str$ which is recursively enumerable in $z$. A real $x$ \emph{passes} a test $W$ if $x \not\in \bigcap_n \Acyl{W_n}$, where $W_n = \{ \sigma : \: (n, \sigma) \in W \}$.
\end{defn}

Passing a test $W$ means not being contained in the $G_\delta$ set given by $W$. The condition `\emph{r.e.\ in $z$}' implies that the open sets given by the sets $W_n$ form a uniform sequence of $\Sigma^0_1(z)$ sets, and the set $\bigcap_n \Acyl{W_n}$ is a $\Pi^0_2(z)$ subset of $\Cant$. To test for randomness, we want to ensure that $W$ actually describes a nullset.

\begin{defn} 
Suppose $\rho$ is a premeasure on $\Cant$. A $z$-test $W$ is \emph{correct for $\mu_\rho$} if 
		\begin{equation} \label{equ-correct-test}
			\sum_{\sigma \in W_n} \rho(\Cyl{\sigma}) \leq 2^{-n}.
		\end{equation}
	Any test which is correct for $\mu_\rho$ will be called a \emph{$z$-test for $\mu_\rho$}, or $\mu_\rho$-$z$-test. (As always, we will drop the subscript $\rho$ if the premeasure is clear from the context.) 
\end{defn}

Finally, we incorporate the information given by the (representation of the) premeasure into the test notion to define randomness with respect to arbitrary outer measures.

\begin{defn}\label{def-test-relative-to-measure}
	Suppose $\rho$ is a premeasure on $\Cant$ and $z \in \Cant$ is a real. Let $p_\rho$ be a representation of $\rho$, i.e.\ either the rational representation $r_\rho$, or in case of a probability measure a Cauchy representation $p \in P$ such that $\pi(p) = \mu_\rho$.
A real is \emph{$\mu_\rho$-random relative to $z$ and representation $p_\rho$}, if it passes all $p_\rho \oplus z$-tests which are correct for $\mu_\rho$.
\end{defn}

An obvious objection to this definition of randomness is that it is \emph{representation dependent}. In fact, \citet{levin:1976,levin:1984} and recently \citet{gacs:2005} have given definitions of randomness independent of the underlying measure. There are arguments in favor of and against both approaches (see also \citep{reimann:ta}). In the context of this paper, the major problem with the representation independent approach is that it is quite difficult to make the measures subject to a classification in terms of logical complexity, such as \emph{Turing degrees}, see \citep{miller:2004}.  

\medskip
In the following, when we say that a real is $\mu$-random, we mean that there exists a representation of $\mu$ such that the real is $\mu$-random relative to that representation.

%
%
\section{Effectively closed sets of measures, randomness, and capacities} \label{sec-rand}

%
%
\subsection{Effectively closed sets and randomness} \label{ssec-closed-rand}

\citet{levin:1973} was the first to use $\Pi^0_1$ classes of measures in algorithmic randomness.
He observed that, given a test $W$, the set of probability measures $\mu$ such that $W$ is correct for $\mu$ is $\Pi^0_1$. Levin was interested in devising \emph{uniform tests for randomness}, and he proved the following related result.
 
\begin{thm}[\citet{levin:1973}]
		Given an effectively closed set $S$ of probability measures, there exists a test $U$ such that for any $x$ that passes $U$ there exists a measure $\mu \in S$ such that $x$ is $\mu$-random.
\end{thm}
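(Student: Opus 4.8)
The plan is to build a single universal test $U$ that simultaneously handles every measure in $S$, by exploiting the $\Pi^0_1$ structure of $S$ together with the observation that the correctness condition for a test is itself a closed condition on measures. First I would fix a recursive tree $T_S \subseteq \Str$ with $[T_S] = S$ in the Cauchy representation, so that the measures in $S$ are presented uniformly. The key idea is to construct, for each $n$, a single clopen (or $\Sigma^0_1$) set $\Acyl{U_n}$ that serves as the $n$-th level of a correct test for \emph{all} $\mu \in S$ at once, so that any $x$ avoiding $\bigcap_n \Acyl{U_n}$ is guaranteed to be $\mu$-random for \emph{some} witnessing $\mu \in S$.

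\medskip
The main steps I would carry out are as follows. First, for a fixed level $n$ and a fixed representation $p \in P$, one can enumerate strings into a set $V^p_n$ greedily, at each stage adding a string $\sigma$ only if doing so keeps the running sum $\sum_{\sigma \in V^p_n} \mu_p(\Cyl{\sigma})$ below $2^{-n}$; since $\mu_p(\Cyl{\sigma})$ is computable from $p$ uniformly (by the properties of the Cauchy representation), this is effective in $p$. Second, and this is the heart of the argument, I would use the compactness of $S$ to replace the per-measure tests by a single uniform test: because $S$ is an effectively compact $\Pi^0_1$ subset of $\Cant$ (equivalently $[T_S]$ is an effectively compact subtree of the $(r_n)$-branching tree $T$), the correctness constraint $\sum_{\sigma \in W_n} \mu_p(\Cyl{\sigma}) \le 2^{-n}$ can be enforced \emph{simultaneously} over all $p \in [T_S]$ by taking, at each finite stage, an infimum of the allowed measure-budgets over the finitely many surviving nodes of $T_S$ at a suitable level. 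I would then let $U_n$ be the union of the strings that survive this uniform enumeration. The final step is the covering/witnessing argument: if $x$ passes $U$, then $x$ escapes $\bigcap_n \Acyl{U_n}$, and one shows by a compactness argument (König's lemma applied to the tree of ``candidate witnesses'' in $S$) that the set of $\mu \in S$ for which $x$ fails to be $\mu$-random is a relatively closed subset of $S$ omitting at least one point; hence some $\mu \in S$ witnesses the $\mu$-randomness of $x$.

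\medskip
The hard part, I expect, will be the uniformity in the second step: making the single test $U$ correct for \emph{every} $\mu \in S$ while still being effective, since naively intersecting the per-measure tests could destroy either correctness (the sum might exceed $2^{-n}$ for some $\mu$) or effectiveness (an infimum over all of $S$ need not be computable). The compactness of $S$ is what rescues this — one only needs to control finitely many ``directions'' at each finite level, and the effective compactness guarantees these finite approximations are computable and converge. A secondary technical point is the final witnessing argument: one must verify that ``$x$ is not $\mu$-random'' is, uniformly in $x$, a condition whose failure-set over $S$ is closed, so that passing the single test $U$ forces the existence of a genuine witness $\mu \in S$ rather than merely an approximate one; here again the compactness of the space of measures, rather than any property of individual measures, does the essential work.
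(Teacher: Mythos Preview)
First, note that the paper does not actually prove this theorem: it is stated and attributed to \citet{levin:1973} without argument, so there is no ``paper's own proof'' to compare against. I therefore evaluate your proposal on its own terms.

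Your proposal has a genuine gap. You devote the main effort (step~2) to making $U$ \emph{correct} for every $\mu \in S$, i.e.\ enforcing $\sum_{\sigma \in U_n} \mu(\sigma) \le 2^{-n}$ for all $\mu \in S$ simultaneously. But in the paper's Definition~\ref{def-test} a ``test'' is merely an r.e.\ subset of $\Nat \times \Str$; it need not be correct for any measure, and indeed the empty test is trivially correct for every measure. So correctness-for-all-$\mu$ cannot be what makes $U$ work. The real failure is in your final witnessing step: you assert that $\{\mu \in S : x \text{ is not } \mu\text{-random}\}$ is closed in $\mu$, but non-randomness is an \emph{existential} condition (there exists some correct test covering $x$), hence $\Sigma$-type rather than closed. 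Passing one particular test $U$ --- however uniformly correct --- gives no information about passing the other $\mu$-tests, so from ``$x$ passes $U$'' you cannot extract any $\mu$ for which $x$ is random. Your greedy $V^p_n$ compounds this: it is just one $\mu_p$-test, not a universal one.

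The missing idea is universality. Uniformly in a representation $p \in [T_S]$ one builds a \emph{universal} $\mu_p$-test $V^p$, so that $x$ is $\mu_p$-random relative to $p$ iff $x \notin \bigcap_m \Acyl{V^p_m}$. One then takes $\Acyl{U_n} = \bigcap_{p \in [T_S]} \Acyl{V^p_n}$. Since $V^p_n$ is r.e.\ in $p$, the set $\{p : x \in \Acyl{V^p_n}\}$ is open in $p$; effective compactness of $[T_S]$ (finite branching, recursive tree) then makes this intersection uniformly $\Sigma^0_1$ in $x$, so $U$ is a genuine test. If $x$ passes $U$, some $n$ has $x \notin \Acyl{U_n}$, hence some $p \in [T_S]$ has $x \notin \Acyl{V^p_n} \supseteq \bigcap_m \Acyl{V^p_m}$, and therefore $x$ is $\mu_p$-random. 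Compactness is used to make the \emph{intersection} of the per-representation universal tests effective --- not to manage a shared correctness budget.
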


A result in a similar spirit has recently been shown independently by \citet{downey-hirschfeldt-miller-nies:2005} and \citet{reimann-slaman:tbs}.

%
%
\begin{thm}\label{thm-rand-conserv-basis}
  Let $z \in \Cant$, and let $T \subseteq \Str$ be an infinite
  tree recursive in $z$. Then, for every real
  $R$ which is $\Leb$-random relative to $z$, there is
  an infinite path $y$ through $T$ such that $R$ is $\Leb$-random relative to $z \oplus y$.
\end{thm}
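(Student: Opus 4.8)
The plan is to prove the contrapositive: if \emph{no} path $y$ through $T$ leaves $R$ random, then $R$ was not $\Leb$-random relative to $z$ to begin with. So assume, toward a contradiction, that for every $y \in [T]$ the real $R$ is \emph{not} $\Leb$-random relative to $z \oplus y$. Write $V^A_n$ for the level-$n$ open set of the universal oracle \ML{} test for $\Leb$ relative to an oracle $A$; recall that $(V^A_n)_n$ is uniformly $\Sigma^0_1(A)$, that $\Leb(V^A_n) \leq 2^{-n}$, and that a real is non-random relative to $A$ exactly when it lies in $\bigcap_n V^A_n$ --- all uniformly in $A$. Our assumption then reads: $R \in \bigcap_n V^{z\oplus y}_n$ for \emph{every} $y \in [T]$.

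The idea is to remove the dependence on $y$ by intersecting over the whole class. Put
\[
	G_n = \bigcap_{y \in [T]} V^{z\oplus y}_n .
\]
Then $R \in \bigcap_n G_n$ by the previous line. The measure bound is immediate: $T$ is infinite and finitely branching, so by K\"onig's Lemma there is at least one path $y_0 \in [T]$, and $G_n \subseteq V^{z\oplus y_0}_n$ gives $\Leb(G_n) \leq 2^{-n}$. Hence, provided $(G_n)$ is uniformly $\Sigma^0_1(z)$, it induces (after thinning its generators to a prefix-free set, which is routine) a $z$-test correct for $\Leb$ that captures $R$, contradicting the $\Leb$-randomness of $R$ relative to $z$.

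The crux is therefore to see that each $G_n$ is $\Sigma^0_1(z)$, uniformly, and this is exactly where the compactness of $T$ enters. The lever is the use principle: when a string enters $V^{z\oplus y}_n$, the underlying enumeration queries only finitely many bits of $y$, namely those fixed by some $\tau \subset y$ with $\tau \in T$. Consequently, for a fixed real $R'$ the set $\{\, y \in [T] : R' \in V^{z\oplus y}_n \,\}$ is relatively open in $[T]$, and $R' \in G_n$ precisely when this set is all of $[T]$. By compactness this holds iff finitely many neighbourhoods already suffice, i.e.\ iff there is a finite \emph{barrier} $\{\tau_1, \dots, \tau_m\} \subseteq T$ --- a finite set of nodes that every path of $[T]$ extends --- together with strings $\sigma_1, \dots, \sigma_m \subset R'$, where each $\sigma_j$ is enumerated into $V^{z\oplus y}_n$ using only the $y$-bits supplied by $\tau_j$. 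I would then define a $z$-enumerable generating set $U_n \subseteq \Str$ for $G_n$ directly from this witness: enumerate $\rho$ into $U_n$ whenever a $z$-search locates such a barrier together with matching $\sigma_j \subseteq \rho$. That ``$\{\tau_1,\dots,\tau_m\}$ is a barrier'' is itself $\Sigma^0_1(z)$ follows once more from K\"onig's Lemma: it is equivalent to the $z$-recursive subtree of nodes extending none of the $\tau_j$ being finite, which is witnessed by the absence of nodes past some finite level. One containment $\Cyl{U_n} \subseteq G_n$ is the use principle, the other $G_n \subseteq \Cyl{U_n}$ is compactness.

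The main obstacle I anticipate is precisely this effectivity bookkeeping: searching for barriers in a uniformly $\Sigma^0_1(z)$ way, matching them with test computations under the correct use, and extracting the result as a genuine $z$-enumerable open set while preserving the $2^{-n}$ bound. The geometric skeleton --- intersect over $[T]$, bound the measure by a single path, and convert the universal quantifier over the compact set $[T]$ into a finite barrier --- is robust; the delicate part is the interplay of the use principle with the recursiveness of $T$ needed to certify barriers effectively.
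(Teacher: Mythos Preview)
The paper does not actually prove Theorem~\ref{thm-rand-conserv-basis}; it is quoted with attribution to Downey--Hirschfeldt--Miller--Nies and Reimann--Slaman and then used as a black box, so there is no in-paper proof to compare against.

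Your argument is correct and is in fact the standard proof of this basis theorem. The three moves you isolate---intersecting the universal oracle test over all $y\in[T]$, bounding the measure by any single path, and converting the universal quantifier over the compact class $[T]$ into a finite barrier via the use principle---are exactly the ingredients. The point you flag as the ``main obstacle'' (effectively certifying that a finite set of nodes is a barrier) is handled just as you say: the subtree of $T$ consisting of nodes extending none of the $\tau_j$ is $z$-recursive, and its finiteness is a $\Sigma^0_1(z)$ event by K\"onig's Lemma. One small remark: the prefix-free thinning you mention is indeed needed here, since the correctness condition \eqref{equ-correct-test} is a sum over generators rather than a measure bound on the open set; the paper itself records the relevant fact (every r.e.\ open set has a prefix-free r.e.\ generating set) as a lemma inside the proof of Corollary~\ref{cor-equiv-strong-veh}.
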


Theorem \ref{thm-rand-conserv-basis} will be the essential ingredient in constructing measures that make a given real random. The basic paradigm for this is:
\begin{quote}
	\emph{Transform a $\Leb$-random real  and find among the admissible transformed measures one that conserves randomness.}
\end{quote}
More precisely, we want to make a real $x$ random. To do so, we Turing reduce it to a $\Leb$-random real. The Turing reduction induces a partial continuous transformation of $\Cant$, which in turn induces a transformation of Lebesgue measure. If we can show that the set of admissible image measures is $\Pi^0_1$, we can use Theorem \ref{thm-rand-conserv-basis} to find one representation that, when added as a parameter to a $\Leb$-test, preserves randomness. 

Subsection \ref{ssec-cap-eff-closed} will present an elaborate example of this technique.
Before, we will motivate this by presenting some background from geometric measure theory.

%
%
\subsection{Capacities and Hausdorff dimension} \label{ssec-cap-dim-class}

The calculation of Hausdorff dimension is often a very difficult task, in particular, obtaining a lower bound. One of the standard tools is the \emph{mass distribution principle} (see \citep{falconer:1990}). A \emph{mass distribution on $A \subseteq \Cant$} is a probability measure such that $\operatorname{supp}(\mu) \subseteq A$.

%
%
\begin{thm} \label{cant:thm_mdistprinc}
Let $A \subseteq \Cant$, and let $\mu$ be a mass distribution on $A$. Suppose further that for some
$s \geq 0$ there are $c, \delta > 0$ such that 
\begin{equation} \label{cant:equ_mdistprop}
        \mu(B) \leq c d(B)^s
\end{equation}
for all $B \subseteq \Cant$ with $d(B) < \delta$. Then 
\[
        s \leq \Hdim(A).
\]
\end{thm}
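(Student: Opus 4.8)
The plan is to prove the contrapositive-style lower bound directly from the definition of Hausdorff dimension via Method~I, exploiting the hypothesis \eqref{cant:equ_mdistprop} to bound the premeasure sums in \eqref{equ-outer-measure} from below. Concretely, I would show that for every $t < s$ the measure $\mu$ itself witnesses $\Hm{t}(A) > 0$, which by the definition $\Hdim A = \inf\{u : \Hm{u}(A) = 0\}$ immediately gives $t \leq \Hdim(A)$, and then let $t \to s$.

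First I would fix $t < s$ and take an arbitrary cover of $A$ by cylinders $N_U$, where $U \subseteq \Str$ and $A \subseteq N_U$. The goal is to bound $\sum_{\sigma \in U} \rho_t(\sigma)$ from below by a positive constant independent of the cover, where $\rho_t(\sigma) = d(\Cyl{\sigma})^t = 2^{-t|\sigma|}$. Since $\mu$ is a mass distribution on $A$, we have $\operatorname{supp}(\mu) \subseteq A \subseteq N_U$, and because $\mu$ is a probability measure, $\mu(N_U) = 1$. By countable subadditivity of $\mu$, $1 = \mu(N_U) \leq \sum_{\sigma \in U} \mu(\Cyl{\sigma})$. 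Now I invoke \eqref{cant:equ_mdistprop}: for every cylinder with $d(\Cyl{\sigma}) < \delta$ we have $\mu(\Cyl{\sigma}) \leq c\, d(\Cyl{\sigma})^s$, so I would first reduce to covers consisting only of small cylinders (any cylinder of diameter $\geq \delta$ can be replaced by its finitely many subcylinders of diameter below $\delta$, which does not increase the premeasure sum for the relevant exponent since $t \leq 1$), and then estimate
\[
	1 \leq \sum_{\sigma \in U} \mu(\Cyl{\sigma}) \leq c \sum_{\sigma \in U} d(\Cyl{\sigma})^s = c \sum_{\sigma \in U} d(\Cyl{\sigma})^{s-t} d(\Cyl{\sigma})^t \leq c\, \delta^{s-t} \sum_{\sigma \in U} d(\Cyl{\sigma})^t.
\]
Rearranging yields $\sum_{\sigma \in U} \rho_t(\sigma) \geq c^{-1}\delta^{t-s} > 0$, uniformly over all admissible small covers. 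Taking the infimum over covers gives $\Hm{t}(A) = \mu_{\rho_t}(A) \geq c^{-1}\delta^{t-s} > 0$, so in particular $\Hm{t}(A) \neq 0$, whence $t \leq \Hdim(A)$.

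Finally, since the inequality $t \leq \Hdim(A)$ holds for every $t < s$, passing to the supremum gives $s \leq \Hdim(A)$, as desired. I would also note that the excerpt records the equivalence of Method~I and Method~II nullsets for Hausdorff measures, so it is harmless to compute with the Method~I premeasure sum $\mu_{\rho_t}$ rather than the full $\Hm{t}$; the positivity conclusion transfers either way. The main obstacle I anticipate is purely bookkeeping rather than conceptual: handling the reduction to covers of small diameter cleanly and making sure the bound $d(\Cyl{\sigma})^{s-t} \leq \delta^{s-t}$ is applied with the correct direction of inequality (using $d(\Cyl{\sigma}) < \delta$ together with $s - t > 0$). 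The only subtlety worth stating carefully is that the subadditivity step requires $\operatorname{supp}(\mu)$ to be genuinely covered, which is where the hypothesis that $\mu$ is a mass distribution on $A$ (so that $\mu(\Cant \setminus A) = 0$) is used.
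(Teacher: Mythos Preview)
The paper does not actually prove this theorem; it is stated as a standard classical tool with a citation to Falconer, so there is no in-paper argument to compare against. Your approach is the standard one, and the core computation is correct, but there is one genuine slip and one unnecessary detour.

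The slip is in your reduction to covers by small cylinders. You claim that replacing a cylinder of diameter $\geq \delta$ by its subcylinders of diameter $< \delta$ ``does not increase the premeasure sum for the relevant exponent since $t \leq 1$.'' The direction is wrong: refining $\Cyl{\sigma}$ into its $2^k$ descendants of length $|\sigma|+k$ changes the $t$-sum from $2^{-t|\sigma|}$ to $2^{k(1-t)}\,2^{-t|\sigma|}$, which is \emph{larger} when $t<1$. Hence a lower bound for the refined cover does not transfer back to the original cover. The fix is easy: either (i) work with the Method~II measure $\Hm{s}_\delta$, where by definition only covers of diameter $<\delta$ are allowed, so the hypothesis applies directly and your chain of inequalities gives $\Hm{s}_\delta(A) \geq 1/c$; or (ii) in Method~I, observe that any cover containing a cylinder of diameter $\geq \delta$ already has sum at least $\delta^t > 0$, and handle the remaining covers as you do.

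The detour is the passage through $t<s$. Once the small-cover issue is handled, your inequality $1 \leq \sum_\sigma \mu(\Cyl{\sigma}) \leq c \sum_\sigma d(\Cyl{\sigma})^s$ already gives $\Hm{s}(A) \geq 1/c > 0$ directly, so $s \leq \Hdim(A)$ without any limiting argument.
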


Theorem \ref{cant:thm_mdistprinc} can be generalized by classifying
mass distributions on $A$ according to whether they satisfy
\eqref{cant:equ_mdistprop} for some $s$. This approach is closely
related to the notion of \emph{capacity}, which first was studied in the
context of potential theory. 
 
%
%
\begin{defn} \label{cant:def_potenergy}
Let $\mu$ be a mass distribution on $\Cant$, $0 \leq t \leq 1$. The
$t$-\emph{potential at} $x \in \Cant$ \emph{due to} $\mu$ is defined
as 
\begin{equation} \label{cant:equ_poten}
        \phi_t(x) = \int d(x,y)^{-t} d\mu(y).
\end{equation}
The $t$-\emph{energy} of $\mu$ is given by
\begin{equation}\label{cant:equ_energy}
        I_t(\mu) = \int \phi_t(x) d\mu(x) = \iint
        d(x,y)^{-t} d\mu(y) d\mu(x). 
\end{equation}
\end{defn}
Observe that if a mass distribution satisfies for some $c,s \in \Real$
\begin{equation}\label{cant:equ_massdbound}
        \mu(\sigma) \leq c2^{-|\sigma|s} \; \text{ for all $\sigma \in \Str$},
\end{equation}
it follows immediately that $\phi_t(A) \leq \operatorname{const}$ for all $t < s$,
hence $I_t(\mu) < \infty$. On the other hand, if $I_t(\mu) < \infty$,
\eqref{cant:equ_massdbound} holds for a suitable restriction of
$\mu$. 

%
%
\begin{defn}\label{cant:def_capac}
Let $s > 0$. The $s$-\emph{capacity} of a class $\Subcl{A}$ is defined as
\[
        C_s(\Cl{A}) = \sup \left\{ \frac{1}{I_s(\mu)}: \: \mu \text{
            mass distr. on $\Cl{A}$ with $\mu(\Cant) = 1$} \right\}.
\]
\end{defn}

(As potentials and capacities may be infinite, we adopt the convention
that $1/\infty = 0$.) We note from the definition that a set has
positive $s$-capacity if and only if there is a mass distribution
$\mu$ on it such that $I_s(\mu) < \infty$. This suggests the following
definition. 

%
%
\begin{defn}\label{cant:def_capdim}
The \emph{capacitary dimension} of a class $\Subcl{A}$ is
\[
        \dim_c(\Cl{A}) = \sup \{ s: \: C_s(\Cl{A}) > 0 \}.
\]
\end{defn}
With little effort it can be shown that 
\[
        \dim_c(\Cl{A}) = \sup \{ s: \: \exists c\: \exists \mu \text{ mass
          distr. on $\Cl{A}$ with $(\forall \sigma \in \Str)\:[\mu(\sigma) \leq c\,2^{-|\sigma|s}]$ } \}. 
\]
Furthermore, the capacitary dimension of a Borel set is equal to its
Hausdorff dimension. 

%
%
\begin{thm}[\citet{frostman:1935}] \label{cant:thm_cdimeqhdim}
Let $\Cl{A}$ be a Borel set in $\Cant$. Then
\[
        \dim_c(\Cl{A}) = \Hdim(\Cl{A}).
\]
\end{thm}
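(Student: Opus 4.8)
The plan is to prove the two inequalities $\dim_c(\Cl{A}) \leq \Hdim(\Cl{A})$ and $\Hdim(\Cl{A}) \leq \dim_c(\Cl{A})$ separately, the first being a routine consequence of the mass distribution principle and the second being the genuine content of the theorem. For the easy direction, suppose $s < \dim_c(\Cl{A})$. By the reformulation of capacitary dimension noted just before the statement, there exists a constant $c$ and a mass distribution $\mu$ on $\Cl{A}$ with $\mu(\sigma) \leq c\,2^{-|\sigma|s}$ for all $\sigma \in \Str$. Since $d(B) < \delta$ forces $B$ to sit inside a cylinder $\Cyl{\sigma}$ of comparable diameter, condition \eqref{cant:equ_massdbound} delivers the hypothesis \eqref{cant:equ_mdistprop} of Theorem \ref{cant:thm_mdistprinc}, which then yields $s \leq \Hdim(\Cl{A})$. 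Taking the supremum over such $s$ gives $\dim_c(\Cl{A}) \leq \Hdim(\Cl{A})$.

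The hard direction is $\Hdim(\Cl{A}) \leq \dim_c(\Cl{A})$, and this is where the real work lies. Fix $s < \Hdim(\Cl{A})$; I must produce a mass distribution $\mu$ supported on $\Cl{A}$ satisfying $\mu(\sigma) \leq c\,2^{-|\sigma|s}$ for some constant $c$, equivalently a measure of finite $s'$-energy for every $s' < s$. The classical route is a compactness-plus-duality argument: for each $n$, one builds a finitely-supported measure on the dyadic cylinders of level $n$ meeting $\Cl{A}$ whose mass is distributed so as to respect the bound $2^{-|\sigma|s}$, using that $\Hm{s}(\Cl{A}) > 0$ to guarantee enough ``room'' at every scale. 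Concretely, I would invoke a max-flow / min-cut (or Hahn–Banach separation) argument on the finite tree of cylinders down to level $n$: the net of admissible measures is nonempty precisely because any covering of $\Cl{A}$ by cylinders carries total $\rho_s$-premeasure bounded below by $\Hm{s}(\Cl{A})$, and this lower bound on cuts translates into the existence of a flow, i.e.\ a level-$n$ measure with the desired local bounds.

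The main obstacle is passing from these approximate finite-level measures $\mu_n$ to a single measure $\mu$ in the limit while preserving both the support condition $\supp(\mu) \subseteq \Cl{A}$ (up to the relevant nullset) and the uniform bound $\mu(\sigma) \leq c\,2^{-|\sigma|s}$. Here I would use that the space of probability measures $\Pmeas$ on $\Cant$ is compact in the weak topology, as recorded in Section \ref{sec-meas-top-cant}: the sequence $(\mu_n)$ has a weakly convergent subnet with limit $\mu$, and weak convergence together with the fact that cylinders are clopen ensures $\mu(\Cyl{\sigma}) = \lim \mu_n(\Cyl{\sigma})$, so the per-cylinder bounds survive in the limit. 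The delicate point is that the supports $\supp(\mu_n)$ may wander; one controls this by arranging each $\mu_n$ to be concentrated within an $\epsilon_n$-neighborhood of $\Cl{A}$ with $\epsilon_n \to 0$, using that $\Cl{A}$ is closed (or more generally analytic, invoking the capacitability of Borel sets), so that the limit measure genuinely resides on $\Cl{A}$. Taking $s \to \Hdim(\Cl{A})$ and noting $C_s(\Cl{A}) > 0$ for all such $s$ then gives $\Hdim(\Cl{A}) \leq \dim_c(\Cl{A})$, completing the proof. I would remark that the effective argument of Section \ref{sec-rand} is designed precisely to replace this compactness-and-duality step by a basis theorem for $\Pi^0_1$ classes of measures.
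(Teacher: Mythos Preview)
Your sketch is a correct outline of one of the classical proofs, but it is not the route the paper takes. In fact the paper does not supply a proof immediately after stating Theorem~\ref{cant:thm_cdimeqhdim}; it quotes the result from \citet{frostman:1935} as background. What the paper \emph{does} contribute is a new proof of the hard direction in the form of Frostman's Lemma (Theorem~\ref{thm-frostman}), and that proof is structurally quite different from yours. You build level-$n$ approximating measures via a max-flow/min-cut or Hahn--Banach argument and then pass to a weak limit using compactness of $\Pmeas$; the paper explicitly lists this as one of the \emph{known} classical proofs it is \emph{replacing}. Its own argument instead relativizes the Effective Capacitability Theorem: pick $z$ so that the compact set $A$ is $\Pi^0_1(z)$, choose $x\in A$ strongly $\Hm{s}$-complex relative to $s\oplus z$, and invoke Corollary~\ref{cor-relative-frostman} to obtain an $s$-bounded probability measure $\mu$ for which $x$ is $\mu$-random relative to $s\oplus z$. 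Since $A$ is $\Pi^0_1(z)$ and contains a $\mu$-random real, $\mu(A)>0$; restricting and normalizing gives the Frostman measure. Compactness enters only through the basis theorem for $\Pi^0_1$ classes (Theorem~\ref{thm-rand-conserv-basis}), and the support condition falls out for free from randomness rather than from chasing supports through a weak limit.

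What each approach buys: your classical compactness argument works directly in arbitrary compact metric spaces and extends to Borel (analytic) $A$ via inner regularity, as you note. The paper's proof is effective in nature---it pins the Frostman measure to a $\Pi^0_1$ class and finds it with a basis theorem---and it yields the pointwise refinement that any single strongly $h$-complex real is already random for such a measure, which the classical limit argument does not give. Your closing remark shows you are aware of this distinction; just be clear that the paper's proof of the hard direction is the effective one, not the weak-compactness one you wrote out.
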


%
%
\subsection{Capacitability of strongly complex reals} \label{ssec-cap-eff-closed}

We will now show that every strongly ${h}$-complex real is ``effectively capacitable''. Note that $\Hm{h}$-almost every real is strongly $h$-complex. We will see later that strong complexity is actually a necessary assumption to prove effective capacitability, hence the following result completely describes to what extend Frostman's Lemma holds effectively.

First, we introduce strong $h$-complexity. \citet{kjos-hanssen-merkle-stephan:2006} defined a real to be \emph{complex} if there exists a computable order function $h$ such that 
\begin{equation}\label{equ-complex}
	(\forall n)\: [ \K(x\Rest{n}) \geq h(n)],
\end{equation}
where $\K$ denotes prefix-free Kolmogorov complexity.
If $x$ is complex via $h$, then we call $x$ $h$-complex. \citet{reimann:2004} showed that $x$ is $h$-complex if and only if it is $\Hm{h}$-random.

One can introduce variants of complexity for reals by replacing $\K$ in \eqref{equ-complex} by another type of Kolmogorov complexity. A \emph{(continuous) semimeasure} is a function $\eta: \Str \to [0,1]$ such that
\begin{equation} \label{equ-semimeas}
	\forall \sigma \: [ \eta(\sigma) \geq \eta(\sigma \Conc 0) + \eta(\sigma\Conc 1) ].
\end{equation}

Levin \citep{zvonkin-levin:1970} proved the existence of an \emph{optimal enumerable semimeasure} $\overline{M}$. A semimeasure is \emph{enumerable} if the set $\{(\sigma,q) \in \Str\times\Rat\colon q < \eta(\sigma)\}$ is r.e. For any enumerable semimeasure $\eta$ there exists a constant $c$ such that for every $\sigma$,
\[
	\eta(\sigma) \leq c \overline{M}(\sigma).
\]
The \emph{a priori complexity} of a string $\sigma$ is defined as $-\log \overline{M}(\sigma)$.
Given a computable order $h$, we say a real $x\in \Cant$ is \emph{strongly $h$-complex} if  
\begin{equation}\label{equ-str-complex}
	(\forall n)\: [ -\log \overline{M}(x\Rest{n}) \geq h(n)],
\end{equation}
Note that up to a constant, $-\log \overline{M}(\sigma) \leq \K(\sigma)$ for all $\sigma$. Hence \eqref{equ-str-complex} implies \eqref{equ-complex}, which justifies the name \emph{strongly complex}. In particular, every strongly $h$-complex real is $\Hm{h}$-random.

Finally, given an order $h$, we say $x$ is \emph{$h$-capacitable} if there exists a probability measure $\mu$ such that, for some $\gamma$,
\begin{equation} \label{equ-frostman-prop}
		(\forall \sigma) \: [\mu(\sigma) \leq \gamma 2^{-h(|\sigma|)}],
\end{equation}
and such that $x$ is $\mu$-random. In the following, we will call a probability measure $\mu$ satisfying \eqref{equ-frostman-prop} \emph{$h$-bounded}.

\begin{thm}[Effective Capacitability Theorem] \label{thm-eff-frostman}
	Suppose $x \in \Cant$ is strongly $h$-complex, where $h$ is a computable, convex order function. Then $x$ is $h$-capacitable.
\end{thm}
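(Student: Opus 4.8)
The plan is to follow the paradigm laid out right before the statement: Turing-reduce $x$ to a $\Leb$-random real, read off the induced continuous transformation of $\Cant$, push Lebesgue measure forward through it, show the resulting family of candidate image measures forms a $\Pi^0_1$ class of $h$-bounded probability measures, and finally invoke the randomness-conservation basis theorem (Theorem~\ref{thm-rand-conserv-basis}) to select a representation that makes $x$ random. Let me sketch the reasoning and flag where the work concentrates.

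First I would exploit the Gács–Kučera theorem: since $x$ is strongly $h$-complex, in particular it is $\Hm{h}$-random and hence noncomputable, so there is a $\Leb$-random real $R$ with $x \leq_{\T} R$, via a Turing functional $\Phi$ with $x = \Phi^R$. The functional $\Phi$ determines a partial continuous map on $\Cant$, and on this map I want to push forward $\Leb$. The natural candidate measure is the image $\mu = \Phi_*\Leb$, defined on cylinders by $\mu(\sigma) = \Leb(\Phi^{-1}N_\sigma)$ — roughly, the Lebesgue measure of the set of oracles that force the output to begin with $\sigma$. The crucial quantitative point, and the reason strong $h$-complexity (rather than mere $\Hm{h}$-randomness) is the right hypothesis, is that the a priori complexity bound $-\log\Mm{}(x\Rest{n}) \geq h(n)$ controls how much oracle mass can be funneled onto the cylinder $N_{x\restriction n}$. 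Because $\Mm{}$ dominates (up to a constant) the measure $\Leb(\{R : \Phi^R \Sgeq \sigma\})$ induced by the reduction — this is exactly the optimality of $\Mm{}$ as an enumerable semimeasure — one obtains $\mu(x\Rest{n}) \leq 2^{-h(n)}$ up to a multiplicative constant $\gamma$, which is precisely the $h$-boundedness condition~\eqref{equ-frostman-prop} along the path $x$. I would then need to arrange $h$-boundedness at \emph{every} cylinder $\sigma$, not just along $x$; here convexity of $h$ enters, letting me modify or truncate $\Phi$ so the semimeasure bound transfers uniformly, using $h(n+1) \leq h(n)+1$ to keep the defect controlled when splitting a cylinder.

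Next I would assemble the $\Pi^0_1$ class. Fixing $\Phi$ and the oracle-measure data, the set $S$ of probability measures $\nu$ (in the Cauchy representation $P$ developed in Section~\ref{sec-meas-top-cant}) that are compatible with being an image of $\Leb$ under the given reduction and that satisfy the $h$-bound $\nu(\sigma) \leq \gamma 2^{-h(|\sigma|)}$ is, by the usual argument (each inequality is a $\Pi^0_1$ condition on the subbasic sets $M_{\sigma,p,q}$ from~\eqref{equ-rat-repres}), an effectively closed subset of the $\Pi^0_1$ space $\Pmeas$. It is nonempty because $\mu = \Phi_*\Leb$ itself lies in it. Pulling $S$ back through the Cauchy representation gives an infinite $z$-recursive subtree $T \subseteq \Str$ whose paths code representations of measures in $S$.

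Finally I apply Theorem~\ref{thm-rand-conserv-basis} to this tree $T$, with $R$ as the $\Leb$-random real and $z$ the reduction data: the theorem yields a path $y \in [T]$ — hence a representation $p$ of some $\mu \in S$ — such that $R$ remains $\Leb$-random relative to $z \oplus y$. The point of conservation is that randomness of $R$ survives having the measure-representation adjoined as a parameter, so that the reduction $\Phi$ still certifies $x = \Phi^R$ as $\mu$-random relative to $p$: any $p \oplus$-test for $\mu$ that caught $x$ would pull back, via $\Phi$ and the $h$-bound linking $\mu$ to $\Leb$, to a $\Leb$-test relative to $z \oplus y$ catching $R$, contradicting conservation. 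Since $\mu \in S$ is $h$-bounded, $x$ is $h$-capacitable, as required. \textbf{The main obstacle} I anticipate is the quantitative heart of the first paragraph: verifying that the a priori complexity lower bound translates cleanly into the pointwise measure upper bound $\mu(\sigma) \leq \gamma 2^{-h(|\sigma|)}$ for \emph{all} $\sigma$ uniformly, and ensuring the pushforward $\Phi_*\Leb$ is genuinely a probability measure (totality/normalization of the reduction) rather than a deficient semimeasure — it is here that the interplay between optimality of $\Mm{}$, convexity of $h$, and possibly a preliminary adjustment of the reduction $\Phi$ must be handled with care.
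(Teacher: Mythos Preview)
Your overall architecture matches the paper's proof closely: Gács--Kučera, the enumerable semimeasure $\sigma \mapsto \Leb(\Phi^{-1}N_\sigma)$ dominated by $\overline{M}$, the $\Pi^0_1$ class of candidate measures, Theorem~\ref{thm-rand-conserv-basis} to select a representation preserving the randomness of $R$, and the lift of a putative $\mu$-test on $x$ to a $\Leb$-test on $R$. Where your plan goes wrong is exactly at the point you flagged as the main obstacle, and your proposed fix is not the one that works.

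You claim the $\Pi^0_1$ class $S$ is nonempty because $\mu = \Phi_*\Leb$ itself lies in it. It does not: $\Phi_*\Leb$ is only a semimeasure (the reduction need not be total on a set of full measure), and the bound $\Leb(\Pre(\sigma)) \leq \gamma 2^{-h(|\sigma|)}$ is only known \emph{along $x$}, not at every $\sigma$. Modifying or truncating $\Phi$ to force the bound globally is not how the paper proceeds, and it is unclear how one would do so without destroying $x = \Phi^R$. Instead, the paper decouples the two roles of $\Phi_*\Leb$. First it defines the co-r.e.\ tree $T = \{\sigma : (\forall n \leq |\sigma|)\,\Leb(\Pre(\sigma\Rest{n})) \leq c'' 2^{-h(n)}\}$, which contains $x$ as a path. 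Then the $\Pi^0_1$ class consists of probability measures $\mu$ satisfying the \emph{two-sided} constraint: $\Leb(\Pre(\tau)) \leq \mu(\tau)$ for $\tau \in T$ (so that tests lift) and $\mu(\sigma) \leq \gamma 2^{-h(|\sigma|)}$ for all $\sigma$ (so that $\mu$ is $h$-bounded). Nonemptiness of this class is a genuine lemma (Lemma~\ref{lem-meas-along-tree} in the paper): given a semimeasure $\eta$ bounded by $\gamma 2^{-h}$ on a tree $T$, one constructs by induction a probability measure $\mu$ squeezed between $\eta$ and $\gamma 2^{-h}$ on $T$, distributing mass uniformly off $T$; convexity of $h$ is used precisely to guarantee that at each branching the interval $[\eta(\sigma\Conc i), \gamma 2^{-h(|\sigma|+1)}]$ is wide enough to accommodate the additivity constraint $\mu(\sigma\Conc 0)+\mu(\sigma\Conc 1) = \mu(\sigma)$, via the intermediate value theorem. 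This lemma, not any adjustment of $\Phi$, is the missing piece in your sketch.
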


\begin{proof}
	By a theorem of \citet{kucera:1985} and \citet{gacs:1986} there exists a Martin-L\"of random real $R$ such that $x \leq_{\T} R$. In fact, there exists a Turing functional $\Phi$ and a $\Pi^0_1$ set $B \subseteq \Cant$ such that all elements of $B$ are Martin-L\"of random and for any $y \in \Cant$ there exists a $S \in B$ such that $y = \Phi(S)$.
		
	For every $\sigma \in \Str$ we define 
	$$
		\Pre(\sigma) = \{ \tau : \: \Phi(\tau) \Sgeq \sigma \: \& \: \forall \tau' \Sle \tau (\Phi(\tau') \nsupseteq \sigma) \}.
	$$ 
	$\Leb(\Pre(.))$ is an enumerable semimeasure. 
		
	It follows that $\Leb(\Pre(.))$ is multiplicatively dominated by an optimal enumerable semimeasure $\overline{M}$. There exists a constant $c$ such that for every $\sigma$,
	\[
		\Leb(\Pre(\sigma)) \leq c \overline{M}(\sigma).
	\]
	
	Since $x$ is strongly $\Hm{h}$-complex, there exists a constant $c'$ such that for all $n$,
	\[
		-\log \overline{M}(x\Rest{n}) \geq h(n) - c'.
	\]
	We conclude that there exists a constant $c''$ such that for all $n$,
	\[
		\Leb(\Pre(x\Rest{n})) \leq c \overline{M}(x\Rest{n}) \leq c'' 2^{-h(n)}
	\]
	Now consider the co-r.e.\ tree
	\[
		T = \{\sigma \in \Str: \: \text{for all $n \leq |\sigma|$, } \Leb(\Pre(\sigma\Rest{n})) \leq c''\, 2^{-h(n)} \}.
	\]
	From the above, $x \in [T]$, i.e.\ $x$ is an infinite path through $T$.
	
	Next we define a set of probability measures $B$ by transforming Lebesgue measure via $\Phi$.
	
	We want to preserve the randomness of $R$ when transforming with $\Phi$. Therefore, we require for every $\mu \in B$ that
	\begin{equation} \label{equ-lower_bound_measure}
		(\forall \sigma\in T) \; [\Leb(\Pre(\sigma)) \leq \mu(\sigma)].
	\end{equation}
	This way, a possible $\mu$-test covering $x$ would ``lift'' to a $\Leb$-test covering $R$.
	Furthermore, we want to meet the requirement given by \eqref{equ-frostman-prop}:
	\begin{equation} \label{equ-upper_bound_measure}
		\mu(\sigma) \leq \gamma 2^{-h(|\sigma|)},
	\end{equation}
	for some constant $\gamma$.
	
	We first show that there exists such a probability measure $\mu$ (and a suitable constant $\gamma$). This is achieved by the following lemma.

	\begin{lem} \label{lem-meas-along-tree}
		Suppose $T \subseteq \Str$ is a non-empty tree. Suppose further that $\eta$ is a semimeasure on $\Str$, and that there exist a convex order $h$ and a constant $\gamma$ such that $\eta(\tau) \leq \gamma\, 2^{-h(\tau)}$ for all $\tau \in T$. Then there is a probability measure $\mu$ such that for all $\sigma \in \Str$,
	\begin{equation} \label{equ-consist-req1}
		\mu(\sigma) \leq \gamma \:2^{-h(|\sigma|)}.
	\end{equation}
	and for all $\tau \in T$, 
	\begin{equation} \label{equ-consist-req2}
		\eta(\tau) \leq \mu(\tau). 
	\end{equation}	
	\end{lem}
	
	\begin{proof}
		We construct a measure inductively along $T$. Let $\mu(\Estr) = 1$. Given $\mu(\sigma)$ and both $\sigma\Conc 0$ and $\sigma\Conc 1$ are in $T$, we want to define $\mu(\sigma \Conc 0)$ and $\mu(\sigma \Conc 1)$ such that
		\[
			\mu(\sigma) = \mu(\sigma \Conc 0) + \mu(\sigma \Conc 1),
		\]
		and such that
		\[
			\eta(\sigma \Conc i) \leq \mu(\sigma \Conc i) \leq \gamma\, 2^{-h(|\sigma|+1)}.
		\]
		Such $\mu(\sigma \Conc 0)$ and $\mu(\sigma \Conc 1)$ exist for the following reason. We have
		\[
			\eta(\sigma\Conc 0) + \eta(\sigma\Conc 1) \leq \eta(\sigma) \leq \mu(\sigma),
		\] 
		by the assumption that $\eta$ is a semimeasure and the inductive hypothesis. 
		On the other hand, since $h$ is a convex order,
		\[
			2^{-h(|\sigma|+1)} + 2^{-h(|\sigma|+1)} \geq 2\cdot 2^{-h(|\sigma|)-1} = 2^{-h(|\sigma|)} \geq (1/\gamma)\,\mu(\sigma).
		\]
		Since the mapping from $[0,1]\times [0,1]$ to $\Real$ given by 
		\[
			(s,t) \; \mapsto \; \eta(\sigma\Conc 0) + s[\gamma\, 2^{-h(|\sigma|+1)} - \eta(\sigma\Conc 0)] + \eta(\sigma\Conc 1) + t[\gamma\, 2^{-h(|\sigma|+1)} - \eta(\sigma\Conc 1)]
		\]
		is continuous, the assertion follows from the intermediate value theorem.

	 	If either $\sigma\Conc 0$ or $\sigma\Conc 1$ is not in $T$, say $\sigma\Conc 1$, let $\mu(\sigma \Conc 0) = \gamma\, 2^{-h(|\sigma|+1)}$ and $\mu(\sigma \Conc 1) = \mu(\sigma) - \gamma\, 2^{-h(|\sigma|+1)}$.
	
	 	If neither $\sigma\Conc 0$ nor $\sigma\Conc 1$ is  in $T$ we let $\mu(\sigma\Conc 0) = \mu(\sigma\Conc 1) = \mu(\sigma)/2$, i.e.\ if we are not on the tree any longer, we distribute the mass uniformly henceforth. Requirement (\ref{equ-consist-req1}) then follows from the convexity of $h$.
	 \end{proof}

	Browsing through the tree $T$ as in the proof of the preceding lemma, one can effectively exclude (using the effectiveness of the metric $d_{\meas}$) more and more basic open neighborhoods from $B$. Each new level of $T$, and the enumeration of $\Pre$, refines the consistency requirements \eqref{equ-consist-req1} and \eqref{equ-consist-req2}. Hence there exists a $\Pi^0_1$ set $P_M$ of (representations of) measures satisfying \eqref{equ-lower_bound_measure} and \eqref{equ-upper_bound_measure}. Furthermore, Lemma \ref{lem-meas-along-tree} shows that $P_M$ is not empty.
		
	Finally, we show that there exists an element $p_\mu \in P_M$ such that $x$ is $\mu$-random relative to representation $p_\mu$. By Theorem \ref{thm-rand-conserv-basis} there exists a $p_\mu \in P_M$ such that $R$ is $\Leb$-random relative to $p_\mu$ (as a parameter of relative randomness, not as a measure). We claim that $x$ is $\mu$-random relative to representation $p_\mu$. Assume $W= (W_n)$ is a $\mu$-test (hence r.e.\ in $p_\mu$) that covers $x$. We define another test $V= (V_n)$: We start enumerating $\Pre(\sigma)$ into $V_n$ if and only if $\sigma$ is enumerated into $W_n$, and enumerate $\Pre(\sigma)$ into $V_n$ as long as $\sigma$ is not removed from $T$, i.e.\ as long as $\Leb(\Pre(\sigma)) \leq c''\, 2^{-h(|\sigma|)}$.
	
	It follows from the definition of $P_M$ that $V$ is a $p_\mu$-test (again, as a parameter, not a measure) that is correct for $\Leb$. Furthermore, $V$ covers $R$. But $R$ is $\Leb$-random relative to $p_\mu$, contradiction. This completes the proof of Theorem \ref{thm-eff-frostman}.
	 \end{proof}

Theorem \ref{thm-eff-frostman} can be relativized in a straightforward way. Suppose $h: \Nat \to \Nat$ is an arbitrary convex order. We say a real $x$ is \emph{strongly $h$-complex relative to $z \in \Cant$} if 
\[
	(\forall n)\: [ -\log \overline{M}^{h\oplus z}(x\Rest{n}) \geq h(n)],
\]
where $\overline{M}^{h\oplus z}$ is a optimal among all semimeasures enumerable in $h\oplus z$.

\begin{cor*}[Kjos-Hanssen and Reimann] \label{cor-relative-frostman} 
	Suppose $x$ is strongly $h$-complex relative to $z \in \Cant$, where $h$ is a convex order function.  Then there exists an $h$-bounded measure $\mu$ such that $x$ is $\mu$-random relative to $h\oplus z$.
\end{cor*}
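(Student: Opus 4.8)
The plan is to relativize the entire proof of Theorem~\ref{thm-eff-frostman} to the oracle $h \oplus z$. Since $h$ is no longer assumed computable, every effective notion appearing in that argument---r.e.\ sets, recursive and co-r.e.\ trees, the Turing functional, the optimal enumerable semimeasure, and the $\Pi^0_1$ class of measures---must now be read relative to $h \oplus z$. First I would invoke the relativized form of the theorem of \citet{kucera:1985} and \citet{gacs:1986}: there is a Turing functional $\Phi$ with oracle $h \oplus z$ and a $\Pi^0_1(h\oplus z)$ class $B$ of reals that are \ML\ random relative to $h \oplus z$, such that every $y \in \Cant$ equals $\Phi(S)$ for some $S \in B$. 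Fixing $S \in B$ with $x = \Phi(S)$ and setting $R = S$, we obtain a real $R$ that is $\Leb$-random relative to $h \oplus z$.

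Next I would carry the complexity bound through verbatim. Defining $\Pre(\sigma)$ from $\Phi$ exactly as in the proof of Theorem~\ref{thm-eff-frostman}, the quantity $\Leb(\Pre(\cdot))$ is a semimeasure enumerable in $h \oplus z$, hence is multiplicatively dominated by the relativized optimal semimeasure $\Mm{h\oplus z}$. The hypothesis that $x$ is strongly $h$-complex relative to $z$, i.e.\ $-\log \Mm{h\oplus z}(x\Rest{n}) \geq h(n)$ for all $n$, then yields a constant $c''$ with $\Leb(\Pre(x\Rest{n})) \leq c''\, 2^{-h(n)}$ for every $n$. This lets me form the tree $T = \{\sigma \in \Str : (\forall n \leq |\sigma|)\; \Leb(\Pre(\sigma\Rest{n})) \leq c''\,2^{-h(n)}\}$, which is now co-r.e.\ in $h \oplus z$ and contains $x$ as an infinite path.

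For the existence of the target measure I would apply Lemma~\ref{lem-meas-along-tree} with $\eta = \Leb(\Pre(\cdot))$ and the convex order $h$. This is the one place where convexity is actually used, and, crucially, the lemma is a purely analytic statement---its proof rests on the intermediate value theorem, not on any effectivity---so it applies unchanged. Thus the class $P_M$ of representations of measures satisfying the lower bound \eqref{equ-lower_bound_measure} and the $h$-boundedness requirement \eqref{equ-upper_bound_measure} is nonempty, and, browsing $T$ and enumerating $\Pre$ relative to $h \oplus z$, it is $\Pi^0_1(h\oplus z)$.

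Finally I would apply Theorem~\ref{thm-rand-conserv-basis} with the oracle taken to be $h \oplus z$ in place of $z$: this is precisely the form in which that basis theorem is stated, so no extra work is needed. It produces a representation $p_\mu \in P_M$ such that $R$ remains $\Leb$-random relative to $(h \oplus z) \oplus p_\mu$. The test-lifting argument then runs exactly as before: any $\mu$-test covering $x$, now r.e.\ in $(h\oplus z)\oplus p_\mu$, lifts through $\Pre$ to a $\Leb$-correct test covering $R$, contradicting the $\Leb$-randomness of $R$ relative to $(h\oplus z)\oplus p_\mu$. Hence $x$ is $\mu$-random relative to $h \oplus z$, and $\mu$ is $h$-bounded by construction. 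I expect no genuine mathematical obstacle here; the only thing demanding care is the bookkeeping---ensuring that $h$ is supplied as part of the oracle throughout, so that $\Mm{h\oplus z}$ and the tree $T$ are legitimately defined, and that the base oracle fed into Theorem~\ref{thm-rand-conserv-basis} is $h \oplus z$ rather than $z$ alone.
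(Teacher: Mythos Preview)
Your proposal is correct and follows exactly the approach the paper intends: the paper states only that ``Theorem \ref{thm-eff-frostman} can be relativized in a straightforward way,'' and you have carried out precisely that relativization, supplying $h\oplus z$ as oracle at each effective step (Ku\v{c}era--G\'acs, the optimal semimeasure, the co-r.e.\ tree $T$, the $\Pi^0_1$ class $P_M$, and the basis theorem) while correctly noting that Lemma~\ref{lem-meas-along-tree} needs no change.
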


We now can immediately deduce a sufficient criterion for continuous randomness.

\begin{cor*}
	Suppose $x$ is strongly $h$-complex, where $h$ is a convex order function with $h \leq_{\T} x$. Then $x$ is random for a continuous probability measure.
\end{cor*}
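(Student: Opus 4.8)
The plan is to read the conclusion off the (relativized) Effective Capacitability Theorem, the only genuinely new input being that $h$-boundedness forces continuity. So I would first record that implication: if $\mu$ is $h$-bounded, i.e.\ satisfies \eqref{equ-frostman-prop}, then for every $y \in \Cant$ one has $\mu(\{y\}) = \lim_n \mu(y\Rest{n}) \leq \lim_n \gamma\, 2^{-h(n)} = 0$, because $h$ is an unbounded order. Hence every $h$-bounded measure is automatically continuous, and it suffices to produce an $h$-bounded $\mu$ for which $x$ is $\mu$-random.

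Producing such a $\mu$ is precisely what Theorem \ref{thm-eff-frostman} does, except that there $h$ is assumed computable, whereas here $h$ is only $\leq_{\T} x$. I would therefore re-run the proof of Theorem \ref{thm-eff-frostman}, being careful about which objects receive the oracle $h$. Concretely, I would keep the Kucera--Gacs reduction $x = \Phi(R)$ and the preimage semimeasure $\Leb(\Pre(\cdot))$ \emph{unrelativized}, so that the hypothesis (strong $h$-complexity, via the domination $\Leb(\Pre(\sigma)) \leq c\,\overline{M}(\sigma)$) still delivers $\Leb(\Pre(x\Rest{n})) \leq c''\,2^{-h(n)}$ with no oracle. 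The cut-off tree $T = \{\sigma : \text{for all } n\leq|\sigma|,\ \Leb(\Pre(\sigma\Rest{n})) \leq c''\,2^{-h(n)}\}$ is now only co-r.e.\ relative to $h$, since it uses the $h$-computable bound $2^{-h(n)}$, and the associated class $P_M$ of representations satisfying \eqref{equ-lower_bound_measure} and \eqref{equ-upper_bound_measure} is a $\Pi^0_1(h)$ class; by Lemma \ref{lem-meas-along-tree} it is nonempty, and we still have $x \in [T]$.

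Next I would invoke Theorem \ref{thm-rand-conserv-basis} relative to $h$: applied to the $h$-computable tree presenting $P_M$, it yields $p_\mu \in P_M$ with $R$ still $\Leb$-random relative to $h\oplus p_\mu$, and the test-lifting argument of Theorem \ref{thm-eff-frostman}, carried out relative to $h$, then shows that $x$ is $\mu$-random relative to $h\oplus p_\mu$. To finish I would descend to the plain statement: $\mu$-randomness relative to the parameter $h\oplus p_\mu$ is a \emph{stronger} requirement than $\mu$-randomness for the representation $p_\mu$ alone, since every $p_\mu$-test is in particular a $(p_\mu\oplus h)$-test. Thus $x$ is $\mu$-random in the sense of Section \ref{ssec-randomness}, and by the first paragraph $\mu$ is continuous.

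The one step that is not routine is the hypothesis of Theorem \ref{thm-rand-conserv-basis} relative to $h$, namely that the coding source $R$ be $\Leb$-random relative to $h$. The unrelativized Kucera--Gacs real coding $x$ is only plain random, and since $\Phi^{-1}(x)$ is a null set one cannot simply invoke co-nullity of the $h$-randoms to upgrade it. This is where $h \leq_{\T} x$ enters: I would instead take $R$ from a relativized Kucera--Gacs theorem, so that $R$ is $\Leb$-random relative to $h$ while still $x \leq_{\T} R$ (coding $x$ simultaneously encodes $h$, so the decoding bootstraps through $h \leq_{\T} x$ without a separate $h$-oracle). The technical heart is to organize this coding so that the preimage semimeasure $\Leb(\Pre(\cdot))$ remains enumerable without the oracle, keeping the unrelativized complexity bound of the second paragraph available; reconciling this \emph{unrelativized} semimeasure with the \emph{$h$-random} source is the main obstacle.
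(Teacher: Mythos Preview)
The paper offers no proof beyond the phrase ``immediately deduce,'' and the intended argument is simply the relativized corollary (Kjos-Hanssen--Reimann) with $z=\emptyset$, together with the two easy observations you already make: an $h$-bounded measure is continuous (your first paragraph), and $\mu$-randomness relative to $h\oplus p_\mu$ trivially implies $\mu$-randomness relative to $p_\mu$ alone (your third paragraph). Under the only reading of ``strongly $h$-complex'' the paper has actually supplied for non-computable $h$ --- the $z=\emptyset$ instance of the relativized definition, with $\overline{M}^{h}$ in place of $\overline{M}$ --- nothing further is needed, and in particular the hypothesis $h\leq_{\T} x$ plays no technical role in the deduction.

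Your second and fourth paragraphs take a genuinely harder route: you insist on the \emph{unrelativized} $\overline{M}$ in the hypothesis and therefore must keep the Ku\v{c}era--G\'acs functional $\Phi$ oracle-free, which forces you to locate an $h$-random $R$ inside the fiber $\Phi^{-1}(x)\cap B$. You flag this as ``the main obstacle'' without resolving it, and it is a real one. The bootstrapping suggestion --- recover $h$ from the partially decoded $x$ so as to run an $h$-relative coding through an oracle-free decoder --- is circular, and there is no soft reason for the fiber to contain an $h$-random: one has $\Leb(\Phi^{-1}(x)\cap B)\leq \lim_n \Leb(\Pre(x\Rest{n}))\leq c\lim_n \overline{M}(x\Rest{n})=0$, so the null set of non-$h$-randoms may well swallow it. If you want the corollary under the strictly unrelativized complexity hypothesis, that is a different (and, as far as this paper is concerned, unaddressed) statement; the corollary as written is meant to be read off the relativized one directly.
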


%
%
\section{Consequences of effective capacitability} \label{sec-consequ-cap}

%
%
\subsection{A new proof of Frostman's Lemma} \label{ssec-new-frost}

Theorem \ref{thm-eff-frostman} yields a new proof of \emph{Frostman's Lemma} \citep{frostman:1935}. 

\begin{thm} \label{thm-frostman}
	If $A$ is a compact subset of $\Cant$ with $\Hm{s}(A) > 0$, then there exists a probability measure $\mu$ such that $\supp(\mu) \subseteq A$, and such that there exists a constant $\gamma$ such that for all $\sigma \in \Str$,
	\[
		\mu(\sigma) \leq \gamma 2^{-|\sigma|s}.
	\]
\end{thm}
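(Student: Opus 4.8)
The plan is to deduce Frostman's Lemma from the relativized Effective Capacitability Theorem (the Corollary of Kjos-Hanssen and Reimann following Theorem~\ref{thm-eff-frostman}) by relativizing to a sufficiently powerful oracle and then locating a strongly complex point inside $A$.

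First I would fix the order $h(n) = sn$. Since the Hausdorff dimension of $\Cant$ never exceeds $1$ we have $s \leq 1$, so $h$ is a convex order (indeed $s(n+1) \leq sn + 1$), and $\Hm{h} = \Hm{s}$. I then choose an oracle $z$ that (i) computes $s$ as a real, so that $h$ is a convex order with $h \leq_{\T} z$, and (ii) makes $A$ effectively closed: being compact, $A$ is the path set $[S]$ of a tree $S$, and relative to $z$ coding $S$ the set $A$ is $\Pi^0_1(z)$, with uniformly computable clopen approximations $A = \bigcap_n A_n$. Note that $h \oplus z \equiv_{\T} z$.

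Next I would produce a strongly $h$-complex point inside $A$. The observation that $\Hm{h}$-almost every real is strongly $h$-complex, relativized to $z$, states that the set of reals failing \eqref{equ-str-complex} (with $\overline{M}$ replaced by $\overline{M}^{h\oplus z}$) is an $\Hm{s}$-nullset. Since $\Hm{s}(A) > 0$, the set $A$ is not contained in this nullset, so there exists $x \in A$ that is strongly $h$-complex relative to $z$. Applying the relativized Effective Capacitability Theorem yields an $h$-bounded probability measure $\mu$, so that $\mu(\sigma) \leq \gamma\, 2^{-|\sigma|s}$ for some constant $\gamma$, and such that $x$ is $\mu$-random relative to $h \oplus z$; fix a representation $p_\mu$ witnessing this.

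The main obstacle is to upgrade this to $\supp(\mu) \subseteq A$, since a priori $\mu$ need not live on $A$. The key claim is that $\mu(A) > 0$. Suppose not. Using the Cauchy representation $p_\mu$, the values $\mu(A_n)$ are uniformly computable from $p_\mu \oplus z$, and $\mu(A_n) \downarrow \mu(A) = 0$; so searching for levels $n$ with $\mu(A_n) \leq 2^{-m}$ yields, at stage $m$, a finite antichain of cylinders of total $\mu$-mass at most $2^{-m}$ whose union contains $A$. This is a $\mu$-test relative to $p_\mu \oplus h \oplus z$ (correct for $\mu$ by \eqref{equ-correct-test}) covering the closed set $A$, hence covering $x \in A$, contradicting the $\mu$-randomness of $x$. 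Therefore $\mu(A) > 0$. Finally I would normalize, setting $\mu' = \mu(\,\cdot \cap A)/\mu(A)$. Then $\mu'$ is a probability measure with $\mu'(\Cant \setminus A) = 0$, so $\supp(\mu') \subseteq A$, and for every $\sigma$,
\[
	\mu'(\sigma) = \frac{\mu(N_\sigma \cap A)}{\mu(A)} \leq \frac{\mu(\sigma)}{\mu(A)} \leq \frac{\gamma}{\mu(A)}\, 2^{-|\sigma|s},
\]
which is the desired Frostman bound with constant $\gamma' = \gamma/\mu(A)$.
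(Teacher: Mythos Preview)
Your proposal is correct and follows essentially the same route as the paper's proof: relativize to an oracle $z$ making $A$ a $\Pi^0_1(z)$ class, pick a strongly $h$-complex $x\in A$, apply the relativized Effective Capacitability Theorem to obtain an $h$-bounded $\mu$ for which $x$ is $\mu$-random, deduce $\mu(A)>0$, then restrict and normalize. The only difference is cosmetic: where the paper simply asserts that a $\Pi^0_1(z)$ class containing a $\mu$-$z$-random real must have positive $\mu$-measure, you spell out the test explicitly via the clopen approximations $A_n$.
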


The problem is to make the support of $\mu$ contained in $A$ while at the same time respect the upper bound on the basic open sets. There are several known proofs of Frostman's Lemma. It can be obtained as a consequence of the existence of compact subsets with finite $\Hm{s}$-measure (see \citep{falconer:1990}), which is even more difficult to prove. Other proofs either construct a sequence of measures such that any limit point in the weak topology (which exists by compactness) will have the desired properties (see \citep{mattila:1995}). Alternatively, one can use the MaxFlow-MinCut Theorem (as in \citep{moerters-peres:ip}). A different approach, which works in general metric spaces, was given by \citet{howroyd:1994}, introducing \emph{weighted Hausdorff measures} and using the Hahn-Banach Theorem.
These proofs make essential use of compactness.

The new proof given here is an easy consequence of the effective capacitability theorem.

\begin{proof}[New proof of Theorem \ref{thm-frostman}]

Let $A \subseteq \Cant$ be closed with $\Hm{s}(A) > 0$. It follows there exists a $z \in \Cant$ such that $A$ is $\Pi^0_1(z)$. Since $\Hm{s}(A) > 0$, there exists an $x \in A$ that is strongly $\Hm{s}$-complex relative to $s\oplus z$. By Corollary \ref{cor-relative-frostman} there exists a probability measure $\mu$ such that $x$ is $\mu$-random relative to $s\oplus z$ and $\mu$ is $s$-bounded with constant $\gamma$.
	
	Since $A$ is $\Pi^0_1(z)$ and contains a $\mu$-$z$-random real, it follows that $\mu(A) > 0$. We restrict $\mu$ to $A$, written $\mu\Rest{A}$, by putting $\mu\Rest{A}(X) = \mu(X \cap A)$. This yields a measure with support contained in $A$ (note that $A$ is closed). Finally, normalizing
	\[
		\nu = \frac{\mu\Rest{A}}{\mu(A)}
	\]
	yields a probability measure $\nu$ satisfying \eqref{equ-frostman-prop} with constant $\gamma/\mu(A)$.  
 \end{proof}

Note that the new proof is of a profoundly effective nature. It features the Kucera-Gacs Theorem, which does not have a classical counterpart (the Lebesgue measure of an upper Turing cone over a non-recursive real is zero). Compactness is used in the form of a basis result for $\Pi^0_1$ classes. Finally, the problem of assigning non-trivial measure to $A$ is easily solved by making an element of $A$ random.

It seems that the general idea, using Lebesgue-random reals along with Turing or other reductions to construct measures, can be applied to other settings, too. Finally, it should be noted that the restriction to Cantor space is not actually a loss of generality, since it can be extended to other metric spaces, including Euclidean space, using \emph{net measures} (see for example \citep{rogers:1970}).

%
%
\subsection{Effective capacities and randomness notions} \label{ssec-cap-hrand-notions}

Theorem \ref{thm-eff-frostman} was proved for strongly $\Hm{h}$-random reals. In the following we show that this assumption is necessary. The proof also yields that in a reasonable sense, strong randomness is the ``strongest'' possible randomness notion for Hausdorff measures based on uniformly enumerable tests. As a corollary, we get that any dimension concept induced by such a randomness notion will coincide with the standard effective Hausdorff dimension.

\subsubsection*{Randomness notions} 
We give here a crude definition of a randomness notion that is sufficient for our purposes. For an exhaustive treatment, one should probably reformulate randomness as a notion of forcing (similar to the correspondence between Solovay genericity and weak randomness), and compare the strength of the forcing notions. However, in view of the results presented here, such an effort does not seem justified, and might only obscure things.

To keep notation simple, we identify premeasures with their representation(s). 

\begin{defn} \label{def-rand-notion}
	A \emph{notion of randomness} $\mathcal{R}$ uniformly assigns to a premeasure $\rho$ a collection of sets $W \subseteq \Nat \times \Str$ r.e.\ in $\rho$ such that $\bigcap W_n$ is a $\rho$-nullset.
	
	More precisely, a notion of randomness $\mathcal{R}$ is a set $\mathcal{R}(\rho) \subseteq \Nat$ definable in second order arithmetic from a parameter $\rho$ such that
	\[
 	 	e \in \mathcal{R}(\rho) \quad \Rightarrow \quad  \bigcap_n W^{\rho}_{e,n} \text{ is $\rho$-null},
	\] 
	where $W^\rho_e$ is the $e$-th r.e.\ in $\rho$ subset of $\Nat \times \Str$ under a canonical enumeration, and $W_{e,n} = \{ \sigma : \: (n, \sigma) \in W_e \}$. 
	Furthermore, we require a certain \emph{uniform consistency} between tests: There exists a (partial) recursive function $f$ such that
	\begin{multline*}
		 	 	\text{if $\rho,\eta$ are premeasures, } e \in \mathcal{R}(\rho), \text { $\eta \leq \rho$ pointwise and $\rho \leq_{\T} \eta$,}  \\
		\text{then } f(e)\downarrow\: \in \mathcal{R}(\eta) \text{ and } W^\rho_e = W^\eta_{f(e)}.
	\end{multline*}

	A real $x$ is \emph{$\rho$-random for notion $\mathcal{R}$} if for all $e \in \mathcal{R}(\rho)$, $x \not\in \bigcap_n W^{\rho}_{e,n}$.
\end{defn}

We consider the following examples. It is easy to see that they all satisfy the consistency requirement, since any $\rho$-test is also an $\eta$-test for any $\eta \leq \rho$ with $\rho \leq_{\T} \eta$.

\begin{enumerate}[(a)]
	\item \emph{Martin-L\"of randomness.} This is given by 
	\[
		\mathcal{R}_{ML}(p_\rho,k) \: \equiv \: \operatorname{Premeasure}(\rho) \; \& \; (\forall n) \left[ \sum \{\rho(\sigma) \colon \sigma \in W^{\rho}_{k,n} \} \leq 2^{-n} \right].
	\]
	
	\item \emph{Solovay randomness} \citep{solovay:1975}. 
	\begin{multline*}
		\mathcal{R}_{S}(p_\rho,k) \: \equiv \: \operatorname{Premeasure}(\rho) \; \& \; (\forall n) \biggl[ W_{k,n+1} \subseteq W_{k,n} \; \& \\
		 |W_{k,n} \setminus W_{k,n+1}| \neq 0 \text{ and finite } \& \;  \sum \{\rho(\sigma) \colon \sigma \in W^{\rho}_{k,1} \} \leq 1 \biggr].	
	\end{multline*}
	
	\item \emph{Strong randomness} (\citet{calude-staiger-terwijn:2006}). Here we need to quantify over open sets. These can be coded as reals. Given $z \in \Cant$, let $V_z$ be the subset of $\Str$ given by $\sigma \in V_z \: \Leftrightarrow \: z(\sigma) = 1$.
	\begin{multline*}
	\mathcal{R}_{\operatorname{str}}(p_\rho,k) \: \equiv \: \operatorname{Premeasure}(\rho) \; \& \; (\forall n) (\forall z) \biggl[ (V_z  \subseteq W_{k,n} \; \& \; V_z \text{ prefix-free } ) \\
	 \Rightarrow \;  \sum \{\rho(\sigma) \colon \sigma \in V_z \} \leq 2^{-n} \biggr].				
	\end{multline*}
	
	\item \emph{Vehement randomness} (\citet{kjoshanssen:ip}). 
	\begin{multline*}
		\mathcal{R}_{v}(p_\rho,k) \: \equiv \: \operatorname{Premeasure}(\rho) \; \& \; (\forall n) (\exists z) \biggl[ \Acyl{V_z} \supseteq  \Acyl{W_{k,n}} \; \& \; \\
		 \sum \{\rho(\sigma) \colon \sigma \in V_z \} \leq 2^{-n} \biggr].				
	\end{multline*}
	
\end{enumerate}

\subsubsection*{Comparison of randomness notions}
Given a premeasure $\rho$, we say a notion of randomness $\mathcal{R}_0$ is \emph{as strong as} a notion $\mathcal{R}_1$ on $\rho$, written $\mathcal{R}_0 \succeq_\rho \mathcal{R}_1$ every $\rho$-random for $\mathcal{R}_0$ is also $\rho$-random for $\mathcal{R}_1$. For a set $\Gamma$ of premeasures, we define $\mathcal{R}_0 \succeq_\Gamma \mathcal{R}_1$ if and only if for all $\rho \in \Gamma$, $\mathcal{R}_0 \succeq_\rho \mathcal{R}_1$. Finally, we write $\mathcal{R}_0 \equiv_{\Gamma} \mathcal{R}_1$ if 
$\mathcal{R}_0 \succeq_\Gamma \mathcal{R}_1$ and $\mathcal{R}_0 \preceq_\Gamma \mathcal{R}_1$.

Recall that $\Pmeas$ denotes the set of all probability measures, $\Hmeas$ the set of convex Hausdorff premeasures, and $\mathcal{G}$ the set of all geometrical premeasures.

It is easy to see that 
\[
	\mathcal{R}_{ML} \preceq_{\Pmeas \, \cup\, \Hmeas} \mathcal{R}_{S}.
\]
However, \citet{reimann-stephan:2005} showed that for all computable geometrical premeasures $\rho$ for which (G3) holds for some $q > 1$,
\[
	\mathcal{R}_{ML} \nsucceq_{\rho} \mathcal{R}_{S}.
\] 
Furthermore, they showed that
\[
	\mathcal{R}_{S} \preceq_{\mathcal{G}} \mathcal{R}_{\operatorname{str}},
\]
and for any computable, length-invariant, geometrical premeasure $\rho$,
\[
	\mathcal{R}_{S} \nsucceq_{\rho} \mathcal{R}_{\operatorname{str}}.
\] 

By noting that every open covering in $\Cant$ has a prefix-free subcovering, it follows that 
\begin{equation} \label{equ-str-leq-veh}
	\mathcal{R}_{\operatorname{str}} \preceq_{\Pmeas \, \cup\, \Hmeas} \mathcal{R}_{v}.
\end{equation}

\medskip
\citet{calude-staiger-terwijn:2006} showed that for a computable order $h$,
\[
	x \text{ is strongly $h$-complex } \quad \text{iff} \quad x \text{ is strongly $\Hm{h}$-random.}
\]
It follows that every strongly $\Hm{h}$-random real is $h$-capacitable.

We now use the effective capacitability theorem to show that strong randomness $\mathcal{R}_{\operatorname{str}}$ is the strongest possible randomness notion among a family of ``reasonable'' (in the sense of Definition \ref{def-rand-notion}) randomness concepts. Let $\Hmeas^*$ denote the family of all computable convex Hausdorff premeasures.

\begin{thm} \label{thm-equiv-strong-rand}
	Suppose $\mathcal{R}$ is a randomness notion such that $\mathcal{R}_{\operatorname{str}} \preceq_{\Hmeas^*} \mathcal{R}$ and $\mathcal{R}_{\operatorname{str}} \equiv_{\Pmeas} \mathcal{R}$. Then  $\mathcal{R}_{\operatorname{str}} \equiv_{\Hmeas^*} \mathcal{R}$.
\end{thm}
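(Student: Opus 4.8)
The plan is to establish the one missing inequality $\mathcal{R}_{\operatorname{str}} \succeq_{\Hmeas^*} \mathcal{R}$; together with the hypothesis $\mathcal{R}_{\operatorname{str}} \preceq_{\Hmeas^*} \mathcal{R}$ this gives the asserted equivalence on $\Hmeas^*$. So I would fix a computable convex Hausdorff premeasure $\rho \in \Hmeas^*$, say $\rho(\sigma) = 2^{-h(|\sigma|)}$ for a computable convex order $h$, take a real $x$ that is $\mathcal{R}_{\operatorname{str}}$-random for $\rho$ (i.e.\ strongly $\Hm{h}$-random), and show that $x$ is $\mathcal{R}$-random for $\rho$. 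The whole point is that strong Hausdorff randomness is so demanding that it can be routed through an honest probability measure, on which $\mathcal{R}$ and $\mathcal{R}_{\operatorname{str}}$ are forced to agree.

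First I would pass to a probability measure. By the theorem of Calude, Staiger and Terwijn, $x$ is strongly $h$-complex, so Theorem~\ref{thm-eff-frostman} supplies an $h$-bounded probability measure $\mu$ and a constant $\gamma$ with $\mu(\sigma) \leq \gamma\, 2^{-h(|\sigma|)} = \gamma\,\rho(\sigma)$ for all $\sigma$, such that $x$ is $\mu$-random. Since $\mu \in \Pmeas$, the hypothesis $\mathcal{R}_{\operatorname{str}} \equiv_{\Pmeas} \mathcal{R}$, together with the classical fact that all the notions in question collapse to Martin-L\"of randomness on probability measures, upgrades this to: $x$ is $\mathcal{R}$-random for $\mu$.

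The final step transfers $\mathcal{R}$-randomness from $\mu$ back to $\rho$ using the uniform consistency built into Definition~\ref{def-rand-notion}. Set $\nu = \gamma^{-1}\mu$; then $\nu$ is a premeasure with $\nu(\sigma) \leq \rho(\sigma)$ pointwise, and $\rho \leq_{\T} \nu$ because $\rho$ is computable. Hence the consistency function $f$ converts every $e \in \mathcal{R}(\rho)$ into $f(e) \in \mathcal{R}(\nu)$ with $W^{\rho}_e = W^{\nu}_{f(e)}$: an $\mathcal{R}(\rho)$-test covering $x$ would lift to an identical $\mathcal{R}(\nu)$-test covering $x$. Thus it suffices to know that $x$ is $\mathcal{R}$-random for $\nu$, and then $x$ is $\mathcal{R}$-random for $\rho$.

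The hard part is precisely this last reduction, i.e.\ carrying $\mathcal{R}$-randomness across the multiplicative constant, from $\mu$ to $\nu = \gamma^{-1}\mu$. Note $\gamma \geq 1$ is forced, since $\mu(\Estr) = 1 \geq \rho(\Estr)$, so $\nu$ is pointwise \emph{smaller} than $\mu$; consistency then only yields the implication ``$\nu$-random $\Rightarrow$ $\mu$-random'', the opposite of what is needed, and the abstract Definition~\ref{def-rand-notion} does not on its own rule out a pathological $\mathcal{R}$ that assigns strictly more tests to $\nu$ than to $\mu$ even though $\nu$ and $\mu$ have identical null sets. I would close this gap with a scale-invariance lemma: multiplication of a premeasure by a positive constant leaves its null sets unchanged, and the correctness bound $\sum_{\sigma \in W_n}\rho(\sigma) \leq 2^{-n}$ underlying each of the concrete notions absorbs such a factor after shifting the level index by $\lceil \log_2 \gamma \rceil$, so that $x$ is $\mathcal{R}$-random for $\mu$ if and only if it is $\mathcal{R}$-random for $\gamma^{-1}\mu = \nu$. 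Granting this invariance, $x$ is $\mathcal{R}$-random for $\nu$, hence for $\rho$, and the proof is complete. This constant is the one genuinely delicate point, and it is where one must appeal to the explicit form of the test-correctness condition rather than to the abstract consistency clause alone.
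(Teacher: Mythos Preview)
Your approach is exactly the paper's: apply the Effective Capacitability Theorem to obtain an $h$-bounded probability measure $\mu$ for which $x$ is random, then argue by contradiction that any $\mathcal{R}(\rho_h)$-test covering $x$ transfers to an $\mathcal{R}(\mu)$-test, contradicting $\mathcal{R}_{\operatorname{str}} \equiv_{\Pmeas} \mathcal{R}$. The paper's proof is terser; at the transfer step it simply writes ``It follows directly from the fact that $\mu$ is $h$-bounded and the consistency property of the randomness notion $\mathcal{R}$ that $W$ is also a $\mu$-test for $\mathcal{R}$,'' without isolating the constant $\gamma$.

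You are right to flag the constant. The consistency clause in Definition~\ref{def-rand-notion} requires $\eta \leq \rho$ \emph{pointwise}, whereas $h$-boundedness only gives $\mu \leq \gamma\,\rho_h$; your rescaling to $\nu = \gamma^{-1}\mu$ makes the pointwise inequality honest but then needs the passage from $\mu$ to $\nu$ to preserve $\mathcal{R}$-randomness, which the bare axioms do not supply. The paper does not address this and in effect treats the consistency clause as if it tolerated a multiplicative constant. Your proposed scale-invariance lemma (absorb $\gamma$ by shifting the level index in the correctness bound) is the natural repair and is valid for every notion whose correctness condition is of the displayed form $\sum_{\sigma\in W_n}\rho(\sigma)\leq 2^{-n}$; all four concrete examples are of this type. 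So there is no substantive difference between your argument and the paper's---you have just made explicit a wrinkle the paper leaves tacit, and correctly noted that under the abstract Definition~\ref{def-rand-notion} alone one needs either a scale-invariance hypothesis or to read ``$h$-bounded'' with $\gamma=1$.
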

    
\begin{proof}
	Let $x$ be strongly $\Hm{h}$-random for computable $h$.  The effective capacitability theorem implies that $x$ is $\mu$-random for some $h$-bounded probability measure $\mu$. Assume $x$ is not $\Hm{h}$-random for $\mathcal{R}$. Then there exists an $\Hm{h}$-test $W$ for $\mathcal{R}$ that covers $x$. It follows directly from the fact that $\mu$ is $h$-bounded and the consistency property of the randomness notion $\mathcal{R}$ that $W$ is also a $\mu$-test for $\mathcal{R}$. Thus $x$ is not $\mu$-random for $\mathcal{R}$, contradicting the assumption $\mathcal{R}_{\operatorname{str}} \equiv_{\Pmeas} \mathcal{R}$.
 \end{proof}

Hence any randomness notion that is as strong as strong randomness but that coincides with strong randomness on probability measures actually coincides with strong randomness on all computable Hausdorff premeasures.

\begin{cor*} \label{cor-equiv-strong-veh}
	$\mathcal{R}_{\operatorname{str}} \equiv_{\Pmeas \, \cup\, \Hmeas^*} \mathcal{R}_{v}$, that is for probability and computable Hausdorff measures, strong and vehement randomness coincide.
\end{cor*}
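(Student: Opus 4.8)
The plan is to split the statement along the union $\Pmeas \cup \Hmeas^*$: since $\mathcal{R}_{\operatorname{str}} \equiv_{\Pmeas \cup \Hmeas^*} \mathcal{R}_v$ means $\mathcal{R}_{\operatorname{str}} \equiv_\rho \mathcal{R}_v$ for every $\rho$ in the union, it is enough to prove (a) $\mathcal{R}_{\operatorname{str}} \equiv_{\Pmeas} \mathcal{R}_v$ and (b) $\mathcal{R}_{\operatorname{str}} \equiv_{\Hmeas^*} \mathcal{R}_v$, and then conjoin them. I would first note that $\mathcal{R}_v$ is a randomness notion in the sense of Definition~\ref{def-rand-notion}, satisfying the uniform consistency requirement (as already observed for all four examples), so that Theorem~\ref{thm-equiv-strong-rand} is applicable to it.

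For (a) I would show that on any $\mu \in \Pmeas$ both the strong and the vehement clause collapse to the single inequality $\mu(\Acyl{W_{k,n}}) \leq 2^{-n}$. The mechanism is additivity, $\mu(\sigma) = \mu(\sigma\Conc 0)+\mu(\sigma\Conc 1)$: the set $V$ of prefix-minimal strings of $W_{k,n}$ is prefix-free, contained in $W_{k,n}$, and generates the same open set, so $\sum_{\sigma \in V}\mu(\sigma) = \mu(\Acyl{W_{k,n}})$ and no prefix-free subset of $W_{k,n}$ has larger $\mu$-mass. Hence the strong clause (which bounds every prefix-free $V_z \subseteq W_{k,n}$) reduces to $\mu(\Acyl{W_{k,n}}) \leq 2^{-n}$. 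For the vehement clause, $V$ itself witnesses the existential when $\mu(\Acyl{W_{k,n}}) \leq 2^{-n}$, while conversely any outer cover $V_z$ with $\Acyl{V_z} \supseteq \Acyl{W_{k,n}}$ forces $\mu(\Acyl{W_{k,n}}) \leq \sum_{\sigma \in V_z}\mu(\sigma) \leq 2^{-n}$ by monotonicity and subadditivity. So the two notions have identical test classes on $\Pmeas$, giving (a).

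For (b) I would invoke Theorem~\ref{thm-equiv-strong-rand} with $\mathcal{R} = \mathcal{R}_v$. Both hypotheses hold: $\mathcal{R}_{\operatorname{str}} \preceq_{\Hmeas^*} \mathcal{R}_v$ is the restriction of \eqref{equ-str-leq-veh} to $\Hmeas^* \subseteq \Hmeas$, and $\mathcal{R}_{\operatorname{str}} \equiv_{\Pmeas} \mathcal{R}_v$ is exactly claim (a). The theorem then delivers $\mathcal{R}_{\operatorname{str}} \equiv_{\Hmeas^*} \mathcal{R}_v$, which is (b); combining (a) and (b) proves the corollary.

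The only substantive step is (a); step (b) is a direct appeal to Theorem~\ref{thm-equiv-strong-rand}. I expect the care in (a) to lie in the passage between an arbitrary open cover and a prefix-free one: one must verify that for a probability measure this passage preserves $\mu$-mass, and that the supremum over prefix-free subsets of $W_{k,n}$ is realized by the prefix-minimal set $V$. This is precisely where additivity is essential and where the contrast with the convex Hausdorff case---in which refining a cover strictly increases the mass sum because $2^{-h(n+1)}+2^{-h(n+1)} \geq 2^{-h(n)}$---disappears, explaining why the two notions can only be forced to agree on $\Hmeas^*$ through the indirect route of Theorem~\ref{thm-equiv-strong-rand}.
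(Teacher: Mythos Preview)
Your proposal is correct and follows the same high-level strategy as the paper: reduce the $\Hmeas^*$ part to Theorem~\ref{thm-equiv-strong-rand} via \eqref{equ-str-leq-veh}, so that only $\mathcal{R}_{\operatorname{str}} \equiv_{\Pmeas} \mathcal{R}_v$ remains to be shown. The difference lies in how that $\Pmeas$ part is handled. You argue that for a probability measure both the strong clause and the vehement clause are logically equivalent to the single condition $\mu(\Acyl{W_{k,n}})\le 2^{-n}$, so that $\mathcal{R}_{\operatorname{str}}(\mu)=\mathcal{R}_v(\mu)$ as sets of indices; the prefix-minimal subset $V$ of $W_{k,n}$ does the work, and no effectiveness of $V$ is needed because you never build a new test. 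The paper instead proves the stronger fact $\mathcal{R}_{\operatorname{ML}}\equiv_{\Pmeas}\mathcal{R}_v$: given a vehement $\mu$-test $W$, it invokes a small lemma producing, uniformly, a prefix-free \emph{r.e.} set $U_n$ with $\Acyl{U_n}=\Acyl{W_n}$, and checks that $(U_n)$ is a Martin-L\"of $\mu$-test covering the same reals. Your route is more elementary and sufficient for the corollary; the paper's route needs the extra r.e.\ lemma but yields the collapse all the way down to $\mathcal{R}_{\operatorname{ML}}$ on probability measures.
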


\begin{proof}
	By Theorem \ref{thm-equiv-strong-rand} and \eqref{equ-str-leq-veh} it suffices to prove that $\mathcal{R}_{\operatorname{str}} \equiv_{\Pmeas} \mathcal{R}_{v}$. We will actually show that $\mathcal{R}_{\operatorname{ML}} \equiv_{\Pmeas} \mathcal{R}_{v}$.

	We need the following Lemma.
	
	\begin{lem}
		If $W \subseteq \Str$ is r.e.\, then there exists a prefix-free r.e.\ set $U \subseteq \Str$ such that $\Acyl{U} = \Acyl{W}$.
	\end{lem}
	
	\begin{proof}[Proof of Lemma] 
		We enumerate a set $U \subseteq \Str$ as follows. Whenever $\sigma$ is enumerated into $W$, check whether an initial segment of $\sigma$ has already been enumerated into $U$. If so, do nothing. Otherwise, if no extension of $\sigma$ has been enumerated into $U$ at this time, enumerate $\sigma$ into $U$. If extensions of $\sigma$ have been enumerated into $U$ already, say $\tau_1, \dots, \tau_k$, choose strings $\xi_1, \dots, \xi_m$ such that $S = \{\tau_1, \dots, \tau_k,\xi_1, \dots, \xi_m\}$ is prefix-free and $\Cyl{S} =\Cyl{\sigma}$. Now it is easy to see that $U$ has the desired properties. 
	 \end{proof}
	
	To complete the proof of Corollary \ref{cor-equiv-strong-veh}, let $\mu$ be a probability measure and suppose $W$ is a vehement $\mu$-test. Let $(V_n)$ be a sequence of open sets such that for each n, $\Acyl{V_n} \supseteq \Acyl{W_n}$ and $\sum \{\mu(\sigma)\colon \sigma \in V_n\} \leq 2^{-n}$. By the previous lemma, for each $n$ there exists (uniformly in $n$) a prefix-free r.e.\ set $U_n \subseteq \Str$ such that $\Acyl{W_n} = \Acyl{U_n}$. Note that for a prefix-free set $U \subseteq \Str$, $\mu(\Cyl{U}) = \sum \{\mu(\sigma)\colon \sigma \in U\}$. Hence for all $n$,
	\[
		\sum \{\mu(\sigma)\colon \sigma \in U_n\} = \mu(\Acyl{U_n}) \leq \mu(\Acyl{V_n}) \leq \{\mu(\sigma)\colon \sigma \in V_n\}  \leq 2^{-n},   
	\] 
	and thus $U = \{U_n\}$ defines a Martin-L\"of $\mu$-test that covers the same reals that $W$ does.
 \end{proof}

Finally we note that a similar argument yields that strong randomness is a necessary condition for effective capacitability.
This was observed by Bj{\o}rn Kjos-Hanssen \citep{kjos-hanssen:email}.

\begin{cor*}[Kjos-Hanssen]
	If $x$ is not strongly $\Hm{h}$-random then $x$ is not effectively $h$-capacitable.
\end{cor*}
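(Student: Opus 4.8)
The plan is to prove the contrapositive: if $x$ is effectively $h$-capacitable, then $x$ is strongly $\Hm{h}$-random. So I would fix an $h$-bounded probability measure $\mu$ witnessing $h$-capacitability, with $\mu(\sigma) \leq \gamma\, 2^{-h(|\sigma|)} = \gamma\,\rho_h(\sigma)$ for all $\sigma \in \Str$ and some constant $\gamma$, and with $x$ being $\mu$-random. Two preliminary observations drive the argument. First, the pointwise inequality $\mu \leq \gamma\,\rho_h$ lets any strong test correct for $\rho_h$ be reinterpreted, up to the multiplicative constant $\gamma$, as a strong test correct for $\mu$. Second, since $\mu$ is a \emph{probability} measure, Martin-L\"of and strong randomness coincide on it: \eqref{equ-str-leq-veh} gives $\mathcal{R}_{\operatorname{str}} \preceq_{\Pmeas} \mathcal{R}_v$, while the preceding corollary yields $\mathcal{R}_{ML} \equiv_{\Pmeas} \mathcal{R}_v$, so $x$ being $\mu$-random (that is, $\mu$-$\mathcal{R}_{ML}$-random) forces $x$ to be $\mu$-$\mathcal{R}_v$-random and hence $\mu$-$\mathcal{R}_{\operatorname{str}}$-random.

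For the core of the argument I would argue by contradiction, mirroring the proof of Theorem \ref{thm-equiv-strong-rand} but with the roles of $\mu$ and $\Hm{h}$ exchanged. Suppose $x$ is \emph{not} strongly $\Hm{h}$-random, i.e.\ not $\rho_h$-random for $\mathcal{R}_{\operatorname{str}}$. Then there is a strong $\rho_h$-test $W = (W_n)$ that covers $x$: for every $n$ and every prefix-free $V \subseteq W_n$ one has $\sum_{\sigma \in V} \rho_h(\sigma) \leq 2^{-n}$, and $x \in \bigcap_n \Acyl{W_n}$. Since $h$ is computable, $W$ is r.e. Choosing $k$ with $2^k \geq \gamma$ and setting $W'_n = W_{n+k}$, the pointwise bound gives, for every $n$ and every prefix-free $V \subseteq W'_n$,
\[
	\sum_{\sigma \in V}\mu(\sigma) \;\leq\; \gamma \sum_{\sigma \in V}\rho_h(\sigma) \;\leq\; \gamma\, 2^{-(n+k)} \;\leq\; 2^{-n},
\]
so $W'$ is a strong $\mu$-test; moreover $\bigcap_n \Acyl{W'_n} = \bigcap_{m \geq k}\Acyl{W_m} \supseteq \bigcap_m \Acyl{W_m} \ni x$, so $W'$ covers $x$ as well. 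As $W'$ is r.e.\ (hence r.e.\ in any representation $p_\mu$ of $\mu$), this contradicts the fact, established above, that $x$ is $\mu$-random for $\mathcal{R}_{\operatorname{str}}$. The contrapositive, and hence the corollary, follows.

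The routine steps—the inequality chain and the index shift—carry no difficulty; the one point that needs care is the constant $\gamma$ coming from $h$-boundedness, and I expect this to be the only real obstacle. The hypothesis supplies merely $\mu \leq \gamma\,\rho_h$ rather than $\mu \leq \rho_h$, so the naive reinterpretation of a strong $\rho_h$-test yields mass bounds $\gamma\,2^{-n}$ instead of $2^{-n}$, and one must re-index the test levels (as above) to recover a genuine strong $\mu$-test. This is exactly the \emph{uniform consistency} property demanded of a randomness notion in Definition \ref{def-rand-notion}, here applied to $\mathcal{R}_{\operatorname{str}}$, and it is what legitimizes transferring the test between the two measures.
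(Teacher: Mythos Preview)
Your proof is correct and follows essentially the same route as the paper's: both transfer a failing $\Hm{h}$-test to a failing $\mu$-test via the pointwise bound $\mu\leq\gamma\rho_h$, and both ultimately rely on the collapse of test notions on probability measures (the prefix-free lemma). The only cosmetic difference is packaging: the paper starts from a vehement $\Hm{h}$-test and uses the prefix-free lemma directly to produce a Martin-L\"of $\mu$-test, whereas you stay with strong tests throughout and invoke the equivalence $\mathcal{R}_{ML}\equiv_{\Pmeas}\mathcal{R}_{\operatorname{str}}$ at the end; your explicit handling of the constant $\gamma$ via re-indexing is in fact a bit more careful than the paper's exposition.
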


\begin{proof}
	Assume $x$ is not strongly $\Hm{h}$-random. Let $\mu$ be an $h$-bounded probability measure. Let $W = \{W_n\}$ be a vehement $\Hm{h}$-test covering $x$, and let $V$ be such that for each n, $\Acyl{V_n} \supseteq \Acyl{W_n}$ and $\sum \{\mu(\sigma)\colon \sigma \in W_n\} \leq 2^{-n}$. As in the proof of Corollary \ref{cor-equiv-strong-veh}, let $U_n$ be a prefix-free r.e.\ set generating the same open set as $W_n$ does. Then
	\[
		\sum \{\mu(\sigma)\colon \sigma \in U_n\} = \mu(\Acyl{U_n}) \leq \mu(\Acyl{V_n}) \leq \{\mu(\sigma)\colon \sigma \in V_n\}  \leq 2^{-n},
	\]
	and hence $U = \{U_n\}$ forms a $\mu$-test covering $x$. Thus $x$ cannot be $\mu$-random.
 \end{proof}

%
%
\subsection{Effective capacities and effective dimension} \label{ssec-cap-dim}
	
Given a randomness notion $\mathcal{R}$, one can define a corresponding \emph{effective Hausdorff dimension} by putting
\[
	\Hdim[\mathcal{R}](x) = \inf\{s \in \Rat\colon x \text{ is  not $\Hm{s}$-random for $\mathcal{R}$} \}.
\]		
We refer to \citep{falconer:1990, mattila:1995, lutz:2003, reimann:2004, hitchcock-lutz-mayordomo:2005} for definitions and background on classical and effective dimension concepts. 
Although it has been shown in \citep{reimann-stephan:2005} that various randomness notions do not agree on Hausdorff measures and yield a strict hierarchy of randomness concepts, we will see now that they all yield the same dimension concept. Furthermore, we use the effective capacitability theorem to prove a new characterization of effective dimension.

\begin{thm}
	For any real $x$,
	\[
		\Hdim[\mathcal{R}_{\operatorname{ML}}](x) = \Hdim[\mathcal{R}_S](x) = \Hdim[\mathcal{R}_{\operatorname{str}}](x) = \Hdim[\mathcal{R}_{v}](x).
	\]
\end{thm}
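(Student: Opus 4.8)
The plan is to show the four effective dimensions coincide by proving two chains of inequalities that, together, squeeze them to a common value. The definition of $\Hdim[\mathcal{R}](x)$ as an infimum over rationals $s$ at which $x$ fails to be $\Hm{s}$-random means that a stronger randomness notion (one that makes more reals nonrandom, hence fails at smaller $s$) yields a smaller, or equal, dimension. Thus the ordering $\mathcal{R}_{\operatorname{ML}} \preceq \mathcal{R}_S \preceq \mathcal{R}_{\operatorname{str}} \preceq \mathcal{R}_{v}$ that was established earlier (via $\mathcal{R}_{ML} \preceq_{\Pmeas \cup \Hmeas} \mathcal{R}_S$, $\mathcal{R}_S \preceq_{\mathcal G} \mathcal{R}_{\operatorname{str}}$, and \eqref{equ-str-leq-veh}) immediately gives
\[
	\Hdim[\mathcal{R}_{v}](x) \leq \Hdim[\mathcal{R}_{\operatorname{str}}](x) \leq \Hdim[\mathcal{R}_S](x) \leq \Hdim[\mathcal{R}_{\operatorname{ML}}](x),
\]
since at every $s$ where $x$ is nonrandom for the weaker notion it is also nonrandom for the stronger one. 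So all the work is in the single reverse inequality $\Hdim[\mathcal{R}_{\operatorname{ML}}](x) \leq \Hdim[\mathcal{R}_{v}](x)$, which will collapse the whole chain.

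To prove the reverse direction I would argue contrapositively at the level of individual $s$. Fix a rational $s$ and suppose $x$ is strongly $\Hm{s}$-random, i.e.\ $\Hm{s}$-random for $\mathcal{R}_{\operatorname{str}}$; equivalently, by \citet{calude-staiger-terwijn:2006} and \citet{reimann:2004}, $x$ is strongly $s$-complex. Since $h(n) = sn$ is a computable convex order, the Effective Capacitability Theorem (Theorem \ref{thm-eff-frostman}) applies and produces an $h$-bounded probability measure $\mu$ for which $x$ is $\mu$-random. The key observation — exactly the mechanism used in Theorem \ref{thm-equiv-strong-rand} — is that $h$-boundedness, $\mu(\sigma) \leq \gamma\, 2^{-|\sigma|s}$, means any $\Hm{s}$-test (for any of the reasonable notions) transfers into a $\mu$-test via the uniform consistency property of Definition \ref{def-rand-notion}, since $\Hm{s}$ dominates $\mu$ up to the constant $\gamma$. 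Hence $\mu$-randomness of $x$ forces $x$ to be $\Hm{s}$-random for $\mathcal{R}_{\operatorname{ML}}$ (indeed for every reasonable notion). Running this for every rational $s$ below $\Hdim[\mathcal{R}_{v}](x) = \Hdim[\mathcal{R}_{\operatorname{str}}](x)$ shows $x$ is $\Hm{s}$-random for $\mathcal{R}_{\operatorname{ML}}$ at all such $s$, so $\Hdim[\mathcal{R}_{\operatorname{ML}}](x) \geq \Hdim[\mathcal{R}_{\operatorname{str}}](x)$, closing the loop.

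The main obstacle is the bookkeeping around the constant $\gamma$: the capacitability theorem only guarantees an $h$-bounded measure with \emph{some} constant, and I must check that the slack introduced by $\gamma$ does not shift the critical exponent. Here the infimum-over-rationals formulation of dimension is what saves the argument — a multiplicative constant $\gamma$ is absorbed harmlessly because $\gamma\, 2^{-|\sigma|s'} \leq 2^{-|\sigma|s}$ eventually for any $s < s'$, so a bound at a slightly larger exponent $s'$ suffices to dominate $\mu$ and transfer tests at exponent $s$. Thus I would not try to match exponents exactly but rather exploit that for every $s$ strictly below the target dimension there is room to apply capacitability at $s$ itself and absorb $\gamma$ into the strict inequality defining the infimum. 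The remaining details — that $\Hm{s}$ with $h(n)=sn$ is a computable convex premeasure lying in $\Hmeas^*$, and that the consistency function $f$ of Definition \ref{def-rand-notion} is exactly the device that converts an $\Hm{s}$-test into the dominated $\mu$-test — are routine given the groundwork already laid in Theorem \ref{thm-equiv-strong-rand}.
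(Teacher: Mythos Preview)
Your setup is correct: the ordering $\mathcal{R}_{\operatorname{ML}} \preceq \mathcal{R}_S \preceq \mathcal{R}_{\operatorname{str}} \preceq \mathcal{R}_v$ gives the chain $\Hdim[\mathcal{R}_v](x) \leq \Hdim[\mathcal{R}_{\operatorname{str}}](x) \leq \Hdim[\mathcal{R}_S](x) \leq \Hdim[\mathcal{R}_{\operatorname{ML}}](x)$, and the remaining task is the reverse inequality $\Hdim[\mathcal{R}_{\operatorname{ML}}](x) \leq \Hdim[\mathcal{R}_v](x)$. But your second step proves the wrong direction. You start from ``$x$ is strongly $\Hm{s}$-random'', invoke capacitability to get an $h$-bounded $\mu$ with $x$ $\mu$-random, and conclude that $x$ is $\Hm{s}$-random for $\mathcal{R}_{\operatorname{ML}}$. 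That implication, strongly $\Hm{s}$-random $\Rightarrow$ Martin-L\"of $\Hm{s}$-random, is already immediate from $\mathcal{R}_{\operatorname{ML}} \preceq \mathcal{R}_{\operatorname{str}}$; the detour through Theorem~\ref{thm-eff-frostman} adds nothing. Your final line ``so $\Hdim[\mathcal{R}_{\operatorname{ML}}](x) \geq \Hdim[\mathcal{R}_{\operatorname{str}}](x)$'' is exactly the inequality you already had from the easy chain, not its converse.

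What is actually needed is the opposite implication at the level of tests: from \emph{failure} of strong $\Hm{s}$-randomness one must derive \emph{failure} of Martin-L\"of $\Hm{t}$-randomness for every $t>s$. The capacitability theorem cannot supply this, because it takes strong complexity as \emph{hypothesis}; indeed the paper emphasizes that strong complexity is a \emph{necessary} assumption there. The paper's proof instead gives an elementary combinatorial argument: given a strong $\Hm{s}$-test $(W_n)$, one stratifies each $W_n$ into prefix-levels $W_n^{(k)}$ (strings with exactly $k$ proper prefixes in $W_n$), observes that strings in $W_n^{(k)}$ have length at least $k$, and bounds $\sum_{\sigma\in W_n} 2^{-|\sigma|t} \leq 2^{-n}\sum_k 2^{-k(t-s)} = 2^{-n}/(1-2^{-(t-s)})$. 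This turns $(W_n)$ into a Martin-L\"of $\Hm{t}$-test after an index shift. No measure construction is involved; the gap $t-s>0$ is what absorbs the unbounded multiplicity that distinguishes a strong test from a Martin-L\"of test.
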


\begin{proof}
	By the results of the previous subsection, it suffices to show that for any $s < t$,
	$x$ not being strongly $\Hm{s}$-random implies $x$ not being (Martin-L\"of) $\Hm{t}$-random.
	
	Suppose $W = \{W_n\}$ is a strong $\Hm{s}$-test. We show that 
	\[
		\sum \{2^{-|\sigma|t} \colon \sigma \in W_n\}
	\]
	can be uniformly bounded by a geometric sequence effectively converging to $0$. This can be used to enumerate $W$ as a Martin-L\"of $\Hm{t}$-test.
	 
	Partition $W_n$ into ``prefix-levels'' $W^{(0)}_n, W^{(1)}_n, \dots$, where 
	$\sigma$ is in $W^{(k)}_n$ if and only if there are $k$ proper prefixes of $\sigma$ in $W$.
	
	Then 
	\[
		\sum \{2^{-|\sigma|t} \colon \sigma \in W^{(0)}_n\} \leq \sum \{2^{-|\sigma|s} \colon \sigma \in W^{(0)}_n\} \leq 2^{-n},
	\]  
	and in general, since every string in $W^{(k)}_n$ has length at least $k$,
	\[
		\sum \{2^{-|\sigma|t} \colon \sigma \in W^{(k)}_n\} \leq  \sum \{2^{-|\sigma|s} \, 2^{-k(t-s)} \colon \sigma \in W^{(k)}_n\} \leq 2^{-n}\, 2^{-k(t-s)}.
	\]
	Hence
	\[
		\sum \{2^{-|\sigma|t} \colon \sigma \in W_n\} \leq 2^{-n} \sum_k 2^{-k(t-s)} = 2^{-n} \frac{1}{1 - 2^{-(t-s)}}.
	\]
 \end{proof}

This answers a question by Kjos-Hanssen. The result was independently obtained by \citet{kjos-hanssen:draft} and \citet{miller:draft}. 

Finally, we use the effective capacitability theorem to give a new characterization of effective dimension, which reflects the duality between the complexity of a real and the capacity of a measure making it random.
  
\begin{thm}
	For any real $x \in \Cant$,
	\begin{equation} \label{equ-cap-edim}
		\Hdim[\mathcal{R}_{\operatorname{ML}}] (x) = \sup \{ s\in \Rat :  x \text{ is $h$-capacitable for $h(n) = sn$} \},
	\end{equation}
	where the supremum is assumed to be zero if the set is empty.
\end{thm}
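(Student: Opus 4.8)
The plan is to set $d = \Hdim[\mathcal{R}_{\operatorname{ML}}](x)$ and let $c$ denote the right-hand side of \eqref{equ-cap-edim}, and to prove $c \le d$ and $d \le c$ separately. Both inequalities become short once the effective capacitability theorem (Theorem \ref{thm-eff-frostman}) and the preceding dimension-coincidence theorem are available; the substantive direction is $d \le c$, which is where effective capacitability enters, while $c \le d$ is a routine test-transfer argument built on $h$-boundedness.

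\emph{The direction $c \le d$.} Suppose $x$ is $h$-capacitable for $h(n) = sn$ with $s \in \Rat$, witnessed by an $h$-bounded measure $\mu$ satisfying $\mu(\sigma) \le \gamma\, 2^{-s|\sigma|}$ for which $x$ is $\mu$-random. I would first observe that every Martin-L\"of $\Hm{s}$-test transfers into a $\mu$-test: if $\sum_{\sigma \in W_n} 2^{-s|\sigma|} \le 2^{-n}$, then $\sum_{\sigma \in W_n} \mu(\sigma) \le \gamma\, 2^{-n}$, so passing to $V_n = W_{n+k}$ with $k = \lceil \log_2 \gamma \rceil$ yields a genuine (recursive, since $s$ is rational) $\mu$-test with $\Acyl{V_n} \supseteq$ the part of $\bigcap_m \Acyl{W_m}$ containing $x$. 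As $x$ is $\mu$-random it must pass $W$; hence $x$ is $\Hm{s}$-random. A second, purely arithmetic monotonicity remark — that a $\Hm{t}$-test is automatically a $\Hm{s}$-test whenever $t < s$, because $2^{-s|\sigma|} \le 2^{-t|\sigma|}$ — then shows $x$ is $\Hm{t}$-random for all $t \le s$, so $d \ge s$. Taking the supremum over all such $s$ gives $d \ge c$.

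\emph{The direction $d \le c$.} Here I would first replace Martin-L\"of dimension by strong dimension using the dimension-coincidence theorem proved just above, which gives $\Hdim[\mathcal{R}_{\operatorname{ML}}](x) = \Hdim[\mathcal{R}_{\operatorname{str}}](x) = d$. The same monotonicity argument, applied now to strong tests (a strong $\Hm{t}$-test is a strong $\Hm{s}$-test for $t < s$), shows that the set of $t$ for which $x$ fails to be strongly $\Hm{t}$-random is upward closed, so every rational $s < d$ has $x$ strongly $\Hm{s}$-random. For such $s$ the function $h(n) = sn$ is a computable convex order — convexity is exactly $s \le 1$, which holds since $d \le 1$ — and by the equivalence of strong $\Hm{h}$-randomness with strong $h$-complexity, Theorem \ref{thm-eff-frostman} applies and yields that $x$ is $h$-capacitable. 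Thus every rational $s < d$ is counted in the supremum defining $c$, whence $c \ge d$. Combining the two inequalities gives \eqref{equ-cap-edim}.

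The main obstacle is essentially conceptual rather than technical: once Theorem \ref{thm-eff-frostman} is in hand, both directions reduce to bookkeeping on top of earlier results. The two points that genuinely require care are the constant-absorbing index shift in the first direction (ensuring the transferred test is a correct $\mu$-test and not merely bounded by $\gamma\, 2^{-n}$) and the appeal to the dimension-coincidence theorem in the second, needed to move from $\mathcal{R}_{\operatorname{ML}}$ to $\mathcal{R}_{\operatorname{str}}$ so that effective capacitability — which is stated for strong complexity — can be invoked. One should also dispose of the boundary cases $d = 0$ and $s = 0$ separately, where the convex-order hypothesis degenerates but the claimed equality is immediate from $c, d \in [0,1]$.
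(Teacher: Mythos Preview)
Your proposal is correct and follows essentially the same two-inequality strategy as the paper: the direction $c \le d$ via transferring $\Hm{s}$-tests to $\mu$-tests using $h$-boundedness, and $d \le c$ via the effective capacitability theorem. You are in fact more careful than the paper's own proof, which glosses over both the constant-absorbing index shift and the need for the dimension-coincidence theorem to pass from $\mathcal{R}_{\operatorname{ML}}$ to $\mathcal{R}_{\operatorname{str}}$ before invoking Theorem~\ref{thm-eff-frostman}.
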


\begin{proof}
	Denote the set on the right hand side of \eqref{equ-cap-edim} by $C$. Obviously, $C$ is the intersection of $\Rat$ with an interval. If $\Hdim[\mathcal{R}_{\operatorname{ML}}] (x) \geq s >0$, then by Theorem \ref{thm-eff-frostman} $s \in C$. Hence $\sup C \geq \Hdim[\mathcal{R}_{\operatorname{ML}}] (x)$. On the other hand, suppose $t > \Hdim[\mathcal{R}_{\operatorname{ML}}] (x)$ and $\mu$ is a probability measure such that for all $\sigma$, $\mu(\sigma) \leq \gamma \, 2^{-t|\sigma|}$. Since $t > \Hdim[\mathcal{R}_{\operatorname{ML}}] (x)$, there exists a $\Hm{t}$-test that covers $x$. But every $\Hm{t}$-test is also a $\mu$-test. Hence, $t \not\in C$, which implies $t \geq \sup C$ and therefore $\Hdim[\mathcal{R}_{\operatorname{ML}}] (x) \geq \sup C$.
 \end{proof}

%
%
\section{Concluding remarks and open questions} \label{sec-conclusion}
	 
The paper has established an exact correspondence between randomness for Hausdorff measures and for probability measures, as inspired by the classical theory which has proved extremely helpful in geometric measure theory. We believe that the effective relation is useful to a similar extent when dealing with randomness for Hausdorff measures, since probability measures are usually nicer to deal with.

Frostman's Lemma has been extended to packing measures by \citet{cutler:1995}.
For details on packing measures and packing dimension, see \citep{falconer:1990}.

\begin{thm} \label{thm-frost-pack}
	If $A$ is a compact subset of $\Cant$ with $\dim_{\operatorname{P}}(A) \geq t > 0$, then there exists a probability measure $\mu$ such that $\supp(\mu) \subseteq A$, and for each $x \in \Cant$ it holds for infinitely many $n$ that
	\[
		\mu(x\Rest{n}) \leq \gamma 2^{-nt}.
	\]
\end{thm}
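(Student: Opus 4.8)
The plan is to run the effective machinery of Theorem \ref{thm-eff-frostman} in its ``infinitely often'' incarnation. First I would isolate the packing analogue of strong complexity: call $x$ \emph{strongly packing $h$-complex} (relative to $z$) if $-\log\overline{M}(x\Rest{n})\geq h(n)$ for \emph{infinitely many} $n$, the only change from \eqref{equ-str-complex} being the switch from $\forall n$ to $\exists^{\infty}n$. Since $-\log\overline{M}$ and $\K$ agree up to an additive $O(\log n)$ term, this is, up to an arbitrarily small loss in the linear rate, the effective packing dimension $\limsup_n \K(x\Rest{n})/n$ being $\geq t$ for $h(n)=tn$. The outer structure then copies the new proof of Frostman's Lemma (Theorem \ref{thm-frostman}): write $A$ as a $\Pi^0_1(z)$ class; extract, for each $t'<t=\dim_{\operatorname{P}}(A)$, a point $x\in A$ that is strongly packing $t'$-complex relative to $z$ (the packing-premeasure counterpart of the extraction used there, available because the $t'$-packing premeasure of $A$ is positive); run the capacitability argument below to obtain a measure $\mu$ making $x$ random whose cylinder measures are small infinitely often; conclude $\mu(A)>0$ from $A\in\Pi^0_1(z)$ and the presence of a $\mu$-random point; and finally restrict to $A$ and normalise, which only improves the constant $\gamma$. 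The passage from the rate $t'$ to the boundary rate $t$ in the statement is the one genuinely classical ingredient, obtained (as in Cutler) by a weak-$*$ limit of the $\mu_{t'}$ on the compact set $A$; equivalently one proves the theorem under the slightly stronger hypothesis that the $t$-packing premeasure of $A$ is positive.

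The heart of the matter is the packing form of the capacitability theorem: if $x$ is strongly packing $h$-complex then there is a probability measure $\mu$ for which $x$ is random and such that \emph{for every} $y\in\Cant$ one has $\mu(y\Rest{n})\leq\gamma\,2^{-h(n)}$ for infinitely many $n$. As before I would use the Ku\v{c}era--G\'acs functional $\Phi$ and the pushforward semimeasure $\eta(\sigma)=\Leb(\Pre(\sigma))$; strong packing complexity gives $\eta(x\Rest{n})\leq c''2^{-h(n)}$ at the infinitely many high-complexity levels of $x$. The measure is built, as in Lemma \ref{lem-meas-along-tree}, by distributing mass down the tree, now under two families of constraints: (i) an \emph{upper} constraint $\mu(\sigma)\leq\gamma\,2^{-h(|\sigma|)}$ imposed on \emph{all} strings of certain spreading levels, which forces the universal infinitely-often conclusion; and (ii) a \emph{domination} constraint $\eta(\sigma)\leq\mu(\sigma)$ needed to lift a putative $\mu$-test for $x$ to a $\Leb$-test for the random $R$ with $\Phi(R)=x$. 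Feasibility of the transport at a spreading level uses convexity of $h$ exactly as in Lemma \ref{lem-meas-along-tree}: the per-string capacity $2^{\,b-a}\gamma\,2^{-h(b)}$ available to the descendants at level $b$ of a string of length $a$ dominates $\gamma\,2^{-h(a)}$, so a concentrated unit of mass can always be re-spread once the gap $b-a$ is taken of order $\tfrac{t}{1-t}\,a$; this also shows the spreading levels may be fixed by a computable schedule with linearly growing gaps, while the demand side is controlled by $\sum_{|\sigma|=n}\eta(\sigma)\leq 1$. The resulting set of Cauchy representations of admissible $\mu$ is $\Pi^0_1$ and nonempty, so Theorem \ref{thm-rand-conserv-basis} supplies a representation $p_\mu$ relative to which $R$ stays $\Leb$-random, and the lift then shows $x$ is $\mu$-random relative to $p_\mu$.

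The main obstacle is the tension, inside a single construction, between (i) and (ii), and it is essential rather than cosmetic. The Kjos-Hanssen corollary (not strongly $\Hm{h}$-random $\Rightarrow$ not $h$-capacitable) shows one \emph{cannot} keep $\mu$ below $\gamma\,2^{-h}$ at \emph{every} level while keeping a merely packing-complex $x$ random: randomness forces $\mu(x\Rest{n})\geq\eta(x\Rest{n})$, which is large at the low-complexity levels of $x$, so the upper bound can hold only infinitely often. Consequently the spreading levels of (i), where the bound is imposed on all strings at once, must fall among the high-complexity levels of $x$ used in (ii) --- yet the former should be effectively specified (to keep the class $\Pi^0_1$ and to yield a branch-independent cofinal set of small levels valid for every $y$), while the latter are branch-dependent and non-computable. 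I expect to resolve this by two devices. First, replace the direct lift by a \emph{lift to the next dominated level}: a string $\sigma$ enumerated into a $\mu$-test is lifted not via $\Pre(\sigma)$ but via $\{\Pre(\tau)\}$ over the antichain of minimal extensions $\tau\Sgeq\sigma$ with $\eta(\tau)\leq\gamma\,2^{-h(|\tau|)}$; since $x$ is low-$\eta$ infinitely often this antichain meets $x$, so the relevant $\Pre(x\Rest{\ell})$ still covers $R$, while $\sum\eta(\tau)\leq\sum\mu(\tau)\leq\mu(\sigma)$ keeps the lifted test correct. This frees (ii) from needing domination at the low-complexity levels of $x$. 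Second, phrase both the domination constraint and the low-$\eta$ selection \emph{stage-wise} (using that $\eta=\Leb(\Pre)$ is only enumerable), so that the implication ``$\eta(\sigma)\leq\gamma\,2^{-h(|\sigma|)}\Rightarrow\eta(\sigma)\leq\mu(\sigma)$'' becomes the genuinely $\Pi^0_1$ requirement ``$\Leb_s(\Pre(\sigma))\leq\gamma\,2^{-h(|\sigma|)}\Rightarrow\Leb_s(\Pre(\sigma))\leq\mu(\sigma)$ for all stages $s$''. The delicate point, where I expect the real work to lie, is to make the lift simultaneously recursively enumerable, correct for $\Leb$, and still covering $R$ --- the co-r.e.\ nature of ``low-$\eta$'' fights the r.e.\ nature demanded of a test --- and to check that the stage-wise constraints leave the transport feasible at every spreading level. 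A more measure-theoretic alternative is to impose directly that the set of persistently $\mu$-large branches be well-founded; this captures the universal conclusion cleanly but pushes the definitional complexity of the admissible class above $\Pi^0_1$, so it would require additional care or a stronger basis theorem.
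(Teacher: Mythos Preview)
The paper does not give its own proof of this theorem; it is quoted from Cutler in the concluding remarks, and the very next paragraph explains that the effective route you are attempting \emph{does not exist}. Conidis constructed a countable $\Pi^0_1$ class of effective packing dimension~$1$, and by the Kjos-Hanssen--Montalb\'an observation no member of a countable $\Pi^0_1$ class is random for any continuous probability measure.

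This is a direct counterexample to the ``packing form of the capacitability theorem'' that is the heart of your plan. Pick $x$ in Conidis's class with $\limsup_n \K(x\Rest{n})/n$ close to~$1$; then $x$ is strongly packing $t$-complex in your sense for, say, $t=1/2$. Any probability measure $\mu$ satisfying your intended conclusion --- for every $y$ there are infinitely many $n$ with $\mu(y\Rest{n})\le\gamma\,2^{-nt}$ --- is continuous, since $\mu(\{y\})\le\liminf_n\mu(y\Rest{n})=0$. Hence $x$ cannot be $\mu$-random for any such $\mu$, and the intermediate theorem you set out to prove is false. The ``tension between (i) and (ii)'' you correctly flag is therefore not a technical obstacle to be finessed by stage-wise bookkeeping or by lifting to the next dominated level; it is an outright obstruction. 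Concretely, in your lift the antichain of minimal $\tau\Sgeq\sigma$ with $\eta(\tau)\le\gamma\,2^{-h(|\tau|)}$ is defined by a $\Pi^0_1$ condition (since $\eta=\Leb(\Pre(\cdot))$ is only lower semicomputable), so the set you need to enumerate into $V_n$ is not r.e.; and the stage-wise variant cannot repair this, because a $\tau$ that looks low at every finite stage may still have $\Pre(\tau)$ grow without bound relative to what you have already committed, so you either fail to cover $R$ or fail correctness for $\Leb$.

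Consequently the strategy of Section~\ref{ssec-new-frost} does not transfer to packing dimension, and the paper is making exactly that point: Cutler's classical proof (via weak-$*$ compactness of measures supported on $A$) is genuinely required here.
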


The effective analogue of this does not hold, in fact it fails in a striking way. Recently, \citet{conidis:ta} has constructed a countable $\Pi^0_1$-class of effective packing dimension $1$. By an observation of \citet{kjoshanssen-montalban:2005}, no member of a countable $\Pi^0_1$-class can be random for a continuous measure, in particular, it cannot be random for a measure as in Theorem \ref{thm-frost-pack}.

This adds to the evidence that non-trivial effective packing dimension cannot really be seen as an indicator of random content, whereas non-trivial effective Hausdorff dimension can, although the computational properties of such reals can be quite different from Martin-L\"of random reals (see recent work by \citet{miller:draft} and \citet{greenberg-miller:draft}).

\subsection*{Open questions}

Recent breakthroughs by \citet{miller:draft} and \citet{greenberg-miller:draft} on the so-called \emph{dimension problem} have shown that not every $\Hm{h}$-random real computes a $\lambda$-random real, even when $h$ is a well-behaved order function such as $h(n) = tn$, where $t$ can be arbitrarily close to $1$. In light of these results it would be very interesting to have a classification of the reals which \emph{do} compute a $\lambda$-random real. One way to ensure this is to be random for a measure which the real itself computes (by results of Levin \citep{zvonkin-levin:1970} and \citet{kautz:1991}). 

\textbf{Question.} \emph{For which strongly $\Hm{h}$-random $x$ does there exist a continuous measure $\mu \leq_{\T} x$ such that $x$ is $\mu$-random?}

It follows directly from the construction in the proof of Theorem \ref{thm-eff-frostman}     that the complexity of $\mu$ does not exceed that of $x$ by much.

\begin{cor*}
	If $x$ is strongly $\Hm{h}$-random, where $h$ is a computable order function, then $x$ is random for a continuous probability measure $\mu$ such that $p_\mu \leq_{\T} x'$.
\end{cor*}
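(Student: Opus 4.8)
The plan is to reinspect the construction in the proof of Theorem~\ref{thm-eff-frostman} and to keep track of Turing degrees at every step, making two refinements that together pin the complexity of the representation $p_\mu$ below $x'$. The measure produced there is already $h$-bounded and makes $x$ random, so nothing about its defining properties changes; only the \emph{choices} made in the two places where the proof invokes an existence principle (Kučera--Gács and Theorem~\ref{thm-rand-conserv-basis}) need to be sharpened.

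First I would replace the bare appeal to Kučera--Gács by its low-basis strengthening. Let $\Phi$ and $B$ be the functional and the $\Pi^0_1$ class of $\Leb$-random reals from the proof of Theorem~\ref{thm-eff-frostman}. Taking $\Phi$ total (the Kučera--Gács reduction may be chosen so), the class
\[
	Q = \{ S \in B : \: \Phi(S) = x \}
\]
is a nonempty $\Pi^0_1(x)$ class. Applying the low basis theorem relative to $x$, I would fix $R \in Q$ with $(R\oplus x)' \leq_{\T} x'$, so in particular $R$ is $\Leb$-random, $\Phi(R) = x$, and $R' \leq_{\T} x'$. Every subsequent use of $R$ --- that $x \leq_{\T} R$, that $x \in [T]$ for the co-r.e.\ tree $T$, and that the semimeasure bounds $\Leb(\Pre(x\Rest{n})) \leq c''\,2^{-h(n)}$ hold --- depends only on $R$ being $\Leb$-random with $\Phi(R)=x$, so this choice leaves the construction of the $\Pi^0_1$ class $P_M$ of $h$-bounded representations untouched.

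Second I would make the final application of Theorem~\ref{thm-rand-conserv-basis} effective. Let $U^p$ denote the universal $\Leb$-Martin-L\"of test relative to the oracle $p$, and for each $n$ set
\[
	D_n = \{ p \in P_M : \: R \notin U^p_n \}.
\]
Since $R \in U^p_n$ is $\Sigma^0_1$ in $R\oplus p$ uniformly in $n$, each $D_n$ is a $\Pi^0_1(R)$ class, and the set of $p \in P_M$ for which $R$ is $\Leb$-random relative to $p$ is exactly $\bigcup_n D_n$. Theorem~\ref{thm-rand-conserv-basis} guarantees this union is nonempty, hence some $D_n \neq \emptyset$. Using $R'$ one finds the least such $n$ (emptiness of a $\Pi^0_1(R)$ class is $R'$-decidable), and then the low basis theorem relative to $R$ yields $p_\mu \in D_n$ with $(p_\mu \oplus R)' \leq_{\T} R'$, whence $p_\mu \leq_{\T} R' \leq_{\T} x'$. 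Any such $p_\mu$ represents an $h$-bounded $\mu$ for which $R$ is $\Leb$-random relative to $p_\mu$, so the lifting argument of Theorem~\ref{thm-eff-frostman} (a $\mu$-test covering $x$ would lift via $\Pre$ to a $p_\mu$-test correct for $\Leb$ covering $R$) shows $x$ is $\mu$-random. Continuity of $\mu$ is automatic: the bound $\mu(\sigma)\leq \gamma\,2^{-h(|\sigma|)}$ together with $h$ unbounded gives $\mu(\{y\}) = \lim_n \mu(y\Rest{n}) = 0$ for every $y$.

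The main obstacle is the second refinement: Theorem~\ref{thm-rand-conserv-basis} is stated purely as an existence result, so the work lies in verifying that its conclusion localizes to the family $(D_n)$ --- i.e.\ that nonemptiness of $\bigcup_n D_n$ is the correct reformulation of ``some $p_\mu \in P_M$ preserves the randomness of $R$'' --- and that the oracle universal tests are uniform enough for each $D_n$ to be genuinely $\Pi^0_1(R)$. Once this is granted, the degree bookkeeping (low basis over $x$ to obtain $R$, then low basis over $R$ to obtain $p_\mu$) is routine and delivers $p_\mu \leq_{\T} x'$.
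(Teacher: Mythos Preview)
Your argument is correct. The paper does not actually give a proof of this corollary; it merely asserts that it ``follows directly from the construction in the proof of Theorem~\ref{thm-eff-frostman}'' and remarks that ``besides the usual basis theorems for $\Pi^0_1$-classes, we do not seem to have much more control over $\mu$.'' Your two-step use of the low basis theorem --- first relative to $x$ to obtain a Ku\v{c}era--G\'acs witness $R$ with $R' \leq_{\T} x'$, then relative to $R$ on a suitable $\Pi^0_1(R)$ subclass $D_n \subseteq P_M$ to obtain $p_\mu \leq_{\T} R'$ --- is exactly the kind of argument the paper is alluding to, and your localization of Theorem~\ref{thm-rand-conserv-basis} via the family $D_n = \{p \in P_M : R \notin U^p_n\}$ (with a nested universal oracle test) is the standard way to make that existence result effective. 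One minor remark: the corollary as printed omits ``convex,'' but your reliance on Theorem~\ref{thm-eff-frostman} (and Lemma~\ref{lem-meas-along-tree}) implicitly and correctly assumes it.
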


But besides the usual basis theorems for $\Pi^0_1$-classes, we do not seem to have much more control over $\mu$.

It is known that $\mu$ cannot always be very simple.

\begin{thm}[\citet{reimann:2004}]
	For every computable convex order function $h$ there exists a real $x$ such that $x$ is $\Hm{h}$-random but not random for any computable probability measure.
\end{thm}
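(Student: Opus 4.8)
The plan is to translate both conditions into statements about prefix complexity and then diagonalize. By the theorem of \citet{reimann:2004} quoted above, $x$ is $\Hm{h}$-random exactly when it is $h$-complex, i.e.\ $\K(x\Rest{n}) \ge h(n) - O(1)$. On the other hand, for a computable probability measure $\mu$ the Levin--Schnorr theorem characterizes $\mu$-randomness by a bounded randomness deficiency: $x$ is $\mu$-random iff $\sup_n\,[-\log\mu(x\Rest{n}) - \K(x\Rest{n})] < \infty$. Thus it suffices to build a single real $x$ satisfying, for every $n$,
\[
	\K(x\Rest{n}) \ge h(n) - O(1),
\]
and such that, for every computable $\mu$ and every constant $c$, there is an $n$ with $-\log\mu(x\Rest{n}) \ge \K(x\Rest{n}) + c$. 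The first clause gives $\Hm{h}$-randomness; the second makes the deficiency infinite for every computable $\mu$ simultaneously, so $x$ is random for none of them.

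I would construct $x = \bigcup_s \sigma_s$ by finite extensions against a fixed enumeration $(\mu_e)_{e\in\Nat}$ of the partial computable measures, meeting requirements $N_{e,c}$ each asking for one witness $-\log\mu_e(x\Rest{n}) \ge \K(x\Rest{n}) + c$. The device for controlling complexity is to keep $\K(x\Rest{n})$ pinned near $h(n)$ from both sides. I commit, level by level, to a set of at most $2^{h(n)+O(1)}$ strings (an enumerable ``description tree'' whose consistency as $n$ grows is underwritten by convexity, $h(n+1)\le h(n)+1$, so the tree is allowed to branch only when the budget $h$ increases). Forcing $x$ onto this tree yields the upper bound $\K(x\Rest{n}) \le h(n)+O(1)$, while a counting argument shows that most paths through a tree of this branching rate are incompressible within it, giving the matching lower bound $\K(x\Rest{n}) \ge h(n) - O(1)$ needed for $\Hm{h}$-randomness.

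To meet $N_{e,c}$ from a segment $\sigma$ of length $\ell$, I would run a long block up to some length $L = \ell + k$ and steer the path into the committed descendant $\tau$ of $\sigma$ of least $\mu_e$-measure. Since the committed descendants of $\sigma$ at level $L$ number at most $2^{\,h(L)-h(\ell)+O(1)}$ and carry total $\mu_e$-mass at most $\mu_e(\sigma)$, averaging produces a $\tau$ with $\mu_e(\tau) \le \mu_e(\sigma)\,2^{-(h(L)-h(\ell))+O(1)}$, while the pinning keeps $\K(\tau) \le h(L)+O(1)$; the resulting deficiency then exceeds $c$ once $k$ is large, provided the deficiency accumulated up to $\sigma$ was never allowed to go negative. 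For measures that spread their mass (e.g.\ $\Leb$, where $-\log\Leb(x\Rest{n})=n$) this is automatic, since the deficiency is already $n - h(n)\to\infty$; the substantive case is a computable $h$-bounded $\mu_e$, for which the effective capacitability theorem warns that an $h$-complex real could otherwise \emph{be} $\mu_e$-random, so the steering is genuinely needed.

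The main obstacle is the simultaneous reconciliation of three competing demands on the path: the upper complexity bound forces the committed tree to be thin (at most $2^{h(n)}$ nodes), the lower complexity bound forces $x$ to be incompressible within that thin tree, and the deficiency requirement forces $x$ into cylinders of exponentially small $\mu_e$-measure. Thinning the tree both helps the upper bound and hurts the measure-steering (fewer descendants to average over), so the estimates only close with room to spare; that room is exactly the convexity slack, since over a block the budget grows by $h(L)-h(\ell)$ while the ambient tree has depth $k$, and $h(\ell+k)\le h(\ell)+k$ guarantees that, for $k$ large, one can both inject the $h$-incompressibility demanded by the lower bound and drive $\mu_e$ down far enough to open a deficiency of prescribed size. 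Verifying that a single $\tau$ can be chosen to satisfy all three constraints, and that the choice preserves extendibility for the next requirement, is the technical heart of the construction.
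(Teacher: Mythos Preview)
The paper does not contain a proof of this theorem; it is quoted from \citep{reimann:2004} (the author's thesis) in the concluding section as motivation for an open question. So there is no in-paper argument to compare your sketch against.

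Independently of that, your outline has a structural gap. Your description tree, as you present it (``branch only when the budget $h$ increases''), is the canonical computable object with exactly $2^{h(n)}$ nodes at level $n$. Let $\mu_T$ be the uniform measure on its paths, so $\mu_T(\sigma) = 2^{-h(|\sigma|)}$ for $\sigma$ on the tree and $0$ off it. This $\mu_T$ is a computable $h$-bounded probability measure, and by Levin--Schnorr a path $x$ through the tree is $\mu_T$-random precisely when $\K(x\Rest{n}) \ge h(n) - O(1)$, i.e.\ precisely when it is $h$-complex. Your diagonalization cannot touch $\mu_T$: all level-$L$ tree-descendants of $\sigma$ have the \emph{same} $\mu_T$-measure $2^{-h(L)}$, so ``steering to the least-measure descendant'' gains nothing, and the deficiency $-\log\mu_T(x\Rest{L}) - \K(x\Rest{L}) = h(L) - \K(x\Rest{L})$ stays bounded on any $h$-complex path. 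Hence any $x$ produced by your scheme is random for the computable measure $\mu_T$, and the conclusion fails for it.

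There is also a local problem with the one step you do spell out. Picking the descendant $\tau$ of least $\mu_e$-measure makes $\tau$ computable from $\sigma$, $e$, and the tree, so $\K(\tau) \le \K(\sigma) + O(\log e) \le h(\ell) + O(\log e)$. To create a deficiency exceeding $c$ you take $L$ with $h(L) - h(\ell)$ large, and then $\K(x\Rest{L}) \ll h(L)$, destroying $h$-complexity at level $L$ and at all intermediate levels. The incompressibility you need and the steering you perform pull in opposite directions; you acknowledge that reconciling them is ``the technical heart'' but do not carry it out, and the uniform-measure example above shows it cannot be carried out within the fixed-tree scaffolding you set up.
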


Finally, it would be a fascinating yet probably difficult endeavor to investigate how much of the results (also those from \citep{reimann-slaman:ip2}) could be transfered to a \emph{representation independent setting} using Miller's framework of \emph{degrees of continuous functions} \citep{miller:2004}.

%
%
\section{Acknowledgements} \label{sec-acknow}

The author would like thank an anonymous referee for many extremely helpful and relevant comments.  The author is also deeply indebted to Bj{\o}rn Kjos-Hanssen for numerous discussions and suggestions, as well as for discovering an error in an earlier version of this article.

\end{document}